\tikzstyle{dot}=[shape=circle,draw,thick,color=black,minimum size=10pt,inner sep=.5pt]
\tikzstyle{->-}=[postaction={decorate,decoration={markings,mark=at position .5 with {\arrow{stealth}}}}]
\tikzstyle{-<-}=[postaction={decorate,decoration={markings,mark=at position .5 with {\arrowreversed{stealth};}}}]
\numberwithin{equation}{section}
\theoremstyle{plain}
\newtheorem*{maintheorem}{Theorem}
\newtheorem{theorem}{Theorem}[section]
\newtheorem{lemma}[theorem]{Lemma}
\newtheorem{proposition}[theorem]{Proposition}
\theoremstyle{definition}
\theoremstyle{remark}
\newtheorem{remark}[theorem]{Remark}
\newcommand{\R}{\mathbb{R}}
\newcommand{\Q}{\mathbb{Q}}
\newcommand{\Z}{\mathbb{Z}}
\newcommand{\defeq}{\mathrel{\mathop{:}}=}
\DeclareFontFamily{U}{mathb}{}
\DeclareFontShape{U}{mathb}{m}{n}{ <5> <6> <7> <8> <9> <10> <12> gen * mathb <11> mathb10}{}
\DeclareSymbolFont{mathb}{U}{mathb}{m}{n}
\DeclareMathSymbol{\Asterisk}     {2}{mathb}{"06}
\newcommand{\bigast}{\mathop{\Asterisk}}
\newcommand{\match}{\nearrow}
\newcommand{\unmatch}{\searrow}
\newcommand*{\addFileDependency}[1]{
   \typeout{(#1)}
   \@addtofilelist{#1}
   \IfFileExists{#1}{}{\typeout{No file #1.}}
}
\newcommand*{\myexternaldocument}[1]{%
   \addFileDependency{#1.tex}%
   \addFileDependency{#1.pdf}%
}
\newcommand{\VR}{\operatorname{VR}}
\newcommand{\diam}{\operatorname{diam}}
\newcommand{\Aut}{\operatorname{Aut}}
\renewcommand{\setminus}{\smallsetminus}
\newcolumntype{C}{>{$}c<{$}}
\newcommand\definevar[2]{%
  \expandafter\newcommand\csname var#1var\endcsname{#2}%
}
\newcommand\redefinevar[2]{%
  \expandafter\renewcommand\csname var#1var\endcsname{#2}%
}
\newcommand{\var}[1]{\csname var#1var\endcsname}
\newcommandx{\test}[2][1={}]{#1#2}
\newcommandx{\tinysimplex}[9][1={0},2={0},3={0},4={0},5={0},6={0},7={0}]{%
\begin{tikzpicture}[scale=.5,baseline={([yshift=-.8ex]current bounding box.center)}]
\def\r{.5}
\def\s{1.2}
\def\ang{50}
\definecolor{color7}{rgb}{1,1,1}
\definecolor{color9}{rgb}{1,1,1}
\definecolor{color12}{rgb}{1,1,1}
\definecolor{color13}{rgb}{1,1,1}
\definecolor{color14}{rgb}{1,1,1}
\definecolor{color15}{rgb}{1,1,1}
\definecolor{color#1}{rgb}{0,0,1}
\definecolor{color#2}{rgb}{0,0,1}
\definecolor{color#3}{rgb}{0,0,1}
\definecolor{color#4}{rgb}{0,0,1}
\definecolor{color#5}{rgb}{0,0,1}
\definecolor{color#6}{rgb}{0,0,1}
\definecolor{color#7}{rgb}{0,0,1}
\definevar{0}{}
\definevar{7}{}
\definevar{9}{}
\definevar{12}{}
\definevar{13}{}
\definevar{14}{}
\definevar{15}{}
\ifthenelse{#8>0}{
\redefinevar{#1}{1}
\redefinevar{#2}{2}
\redefinevar{#3}{3}
\redefinevar{#4}{4}
\redefinevar{#5}{5}
\redefinevar{#6}{6}
\redefinevar{#7}{7}
}{}
\node[dot,fill=color13,text=white] (13) at (180:\r) {\tiny\bfseries\var{13}};
\node[dot,fill=color14,text=white] (14) at (0:\r) {\tiny\bfseries\var{14}};
\node[dot,fill=color15,text=white] (15) at (\ang:\s) {\tiny\bfseries\var{15}};
\node[dot,fill=color7,text=white] (7) at(-\ang:\s) {\tiny\bfseries\var{7}};
\node[dot,fill=color12,text=white] (12) at (180-\ang:\s) {\tiny\bfseries\var{12}};
\node[dot,fill=color9,text=white] (9) at (180+\ang:\s) {\tiny\bfseries\var{9}};
\draw[thick] (14) -- (7) (14) -- (15)  (13) -- (9) (13) -- (12);
\draw[thick,#9] (13) -- (14);
\end{tikzpicture}
}
\newcommandx{\smallersimplex}[8][1={0},2={0},3={0},4={0},5={0},6={0},7={0}]{%
\begin{tikzpicture}[scale=.5,rotate=30,baseline={([yshift=-.8ex]current bounding box.center)}]
\def\r{1}
\def\s{1.5}
\def\ang{20}
\definecolor{color7}{rgb}{1,1,1}
\definecolor{color12}{rgb}{1,1,1}
\definecolor{color13}{rgb}{1,1,1}
\definecolor{color14}{rgb}{1,1,1}
\definecolor{color15}{rgb}{1,1,1}
\definecolor{color16}{rgb}{1,1,1}
\definecolor{color#1}{rgb}{0,0,1}
\definecolor{color#2}{rgb}{0,0,1}
\definecolor{color#3}{rgb}{0,0,1}
\definecolor{color#4}{rgb}{0,0,1}
\definecolor{color#5}{rgb}{0,0,1}
\definecolor{color#6}{rgb}{0,0,1}
\definecolor{color#7}{rgb}{0,0,1}
\definevar{0}{}
\definevar{7}{}
\definevar{12}{}
\definevar{13}{}
\definevar{14}{}
\definevar{15}{}
\definevar{16}{}
\ifthenelse{#8>0}{
\redefinevar{#1}{1}
\redefinevar{#2}{2}
\redefinevar{#3}{3}
\redefinevar{#4}{4}
\redefinevar{#5}{5}
\redefinevar{#6}{6}
\redefinevar{#7}{7}
}{}
\node[dot,fill=color14,text=white] (14) at (0,0) {\tiny\bfseries\var{14}};
\node[dot,fill=color15,text=white] (15) at (0:1) {\tiny\bfseries\var{15}};
\node[dot,fill=color13,text=white] (13) at (120:\r) {\tiny\bfseries\var{13}};
\node[dot,fill=color7,text=white] (7) at(-120:\r) {\tiny\bfseries\var{7}};
\node[dot,fill=color12,text=white] (12) at (60+\ang:\s) {\tiny\bfseries\var{12}};
\node[dot,fill=color16,text=white] (16) at (60-\ang:\s) {\tiny\bfseries\var{16}};
\draw[thick] (14) -- (7) (14) -- (15) (14) -- (13)
  (13) -- (12) (15) -- (16)
  (12) -- (16);
\end{tikzpicture}
}
\newcommandx{\smallsimplex}[9][1={0},2={0},3={0},4={0},5={0},6={0},7={0}]{%
\begin{tikzpicture}[scale=.5,baseline={([yshift=-.8ex]current bounding box.center)}]
\def\r{.5}
\def\s{1.2}
\def\t{1.5}
\def\ang{50}
\definecolor{color7}{rgb}{1,1,1}
\definecolor{color9}{rgb}{1,1,1}
\definecolor{color12}{rgb}{1,1,1}
\definecolor{color13}{rgb}{1,1,1}
\definecolor{color14}{rgb}{1,1,1}
\definecolor{color15}{rgb}{1,1,1}
\definecolor{color16}{rgb}{1,1,1}
\definecolor{color#1}{rgb}{0,0,1}
\definecolor{color#2}{rgb}{0,0,1}
\definecolor{color#3}{rgb}{0,0,1}
\definecolor{color#4}{rgb}{0,0,1}
\definecolor{color#5}{rgb}{0,0,1}
\definecolor{color#6}{rgb}{0,0,1}
\definecolor{color#7}{rgb}{0,0,1}
\definevar{0}{}
\definevar{7}{}
\definevar{9}{}
\definevar{12}{}
\definevar{13}{}
\definevar{14}{}
\definevar{15}{}
\definevar{16}{}
\ifthenelse{#8>0}{
\redefinevar{#1}{1}
\redefinevar{#2}{2}
\redefinevar{#3}{3}
\redefinevar{#4}{4}
\redefinevar{#5}{5}
\redefinevar{#6}{6}
\redefinevar{#7}{7}
}{}
\node[dot,fill=color13,text=white] (13) at (180:\r) {\tiny\bfseries\var{13}};
\node[dot,fill=color14,text=white] (14) at (0:\r) {\tiny\bfseries\var{14}};
\node[dot,fill=color15,text=white] (15) at (\ang:\s) {\tiny\bfseries\var{15}};
\node[dot,fill=color7,text=white] (7) at(-\ang:\s) {\tiny\bfseries\var{7}};
\node[dot,fill=color12,text=white] (12) at (180-\ang:\s) {\tiny\bfseries\var{12}};
\node[dot,fill=color16,text=white] (16) at (90:\t) {\tiny\bfseries\var{16}};
\node[dot,fill=color9,text=white] (9) at (180+\ang:\s) {\tiny\bfseries\var{9}};
\draw[thick] (14) -- (7) (14) -- (15)
  (13) -- (9) (13) -- (12) (15) -- (16)
  (12) -- (16);
\draw[thick,#9] (13) -- (14);
\end{tikzpicture}
}
\newcommandx{\simplex}[8][1={0},2={0},3={0},4={0},5={0},6={0},7={0}]{%
\begin{tikzpicture}[scale=.5,rotate=30,baseline={([yshift=-.8ex]current bounding box.center)}]
\def\r{1}
\def\s{1.5}
\def\ang{20}
\definecolor{color5}{rgb}{1,1,1}
\definecolor{color6}{rgb}{1,1,1}
\definecolor{color7}{rgb}{1,1,1}
\definecolor{color8}{rgb}{1,1,1}
\definecolor{color9}{rgb}{1,1,1}
\definecolor{color12}{rgb}{1,1,1}
\definecolor{color13}{rgb}{1,1,1}
\definecolor{color14}{rgb}{1,1,1}
\definecolor{color15}{rgb}{1,1,1}
\definecolor{color16}{rgb}{1,1,1}
\definecolor{color#1}{rgb}{0,0,1}
\definecolor{color#2}{rgb}{0,0,1}
\definecolor{color#3}{rgb}{0,0,1}
\definecolor{color#4}{rgb}{0,0,1}
\definecolor{color#5}{rgb}{0,0,1}
\definecolor{color#6}{rgb}{0,0,1}
\definecolor{color#7}{rgb}{0,0,1}
\definevar{0}{}
\definevar{5}{}
\definevar{6}{}
\definevar{7}{}
\definevar{8}{}
\definevar{9}{}
\definevar{12}{}
\definevar{13}{}
\definevar{14}{}
\definevar{15}{}
\definevar{16}{}
\ifthenelse{#8>0}{
\redefinevar{#1}{1}
\redefinevar{#2}{2}
\redefinevar{#3}{3}
\redefinevar{#4}{4}
\redefinevar{#5}{5}
\redefinevar{#6}{6}
\redefinevar{#7}{7}
}{}
\node[dot,fill=color14,text=white] (14) at (0,0) {\tiny\bfseries\var{14}};
\node[dot,fill=color15,text=white] (15) at (0:\r) {\tiny\bfseries\var{15}};
\node[dot,fill=color13,text=white] (13) at (120:\r) {\tiny\bfseries\var{13}};
\node[dot,fill=color7,text=white] (7) at(-120:\r) {\tiny\bfseries\var{7}};
\node[dot,fill=color12,text=white] (12) at (60+\ang:\s) {\tiny\bfseries\var{12}};
\node[dot,fill=color16,text=white] (16) at (60-\ang:\s) {\tiny\bfseries\var{16}};
\node[dot,fill=color8,text=white] (8)  at (180+\ang:\s) {\tiny\bfseries\var{8}};
\node[dot,fill=color9,text=white] (9) at (180-\ang:\s) {\tiny\bfseries\var{9}};
\node[dot,fill=color5,text=white] (5) at (-60+\ang:\s) {\tiny\bfseries\var{5}};
\node[dot,fill=color6,text=white] (6) at (-60-\ang:\s) {\tiny\bfseries\var{6}};
\draw[thick] (14) -- (7) (14) -- (15) (14) -- (13)
  (7) -- (8) (7) -- (6) (13) -- (9) (13) -- (12) (15) -- (5) (15) -- (16)
  (5) -- (6) (8) -- (9) (12) -- (16);
\end{tikzpicture}
}
\newcommandx{\hugesimplex}[8][1={0},2={0},3={0},4={0},5={0},6={0},7={0}]{%
\begin{tikzpicture}[scale=.5,baseline={([yshift=-5ex]current bounding box.center)}]
\def\r{1}
\def\s{2}
\def\t{2.5}
\def\u{3.5}
\definecolor{color1}{rgb}{1,1,1}
\definecolor{color2}{rgb}{1,1,1}
\definecolor{color3}{rgb}{1,1,1}
\definecolor{color4}{rgb}{1,1,1}
\definecolor{color5}{rgb}{1,1,1}
\definecolor{color6}{rgb}{1,1,1}
\definecolor{color7}{rgb}{1,1,1}
\definecolor{color8}{rgb}{1,1,1}
\definecolor{color9}{rgb}{1,1,1}
\definecolor{color10}{rgb}{1,1,1}
\definecolor{color11}{rgb}{1,1,1}
\definecolor{color12}{rgb}{1,1,1}
\definecolor{color13}{rgb}{1,1,1}
\definecolor{color14}{rgb}{1,1,1}
\definecolor{color15}{rgb}{1,1,1}
\definecolor{color16}{rgb}{1,1,1}
\definecolor{color17}{rgb}{1,1,1}
\definecolor{color18}{rgb}{1,1,1}
\definecolor{color19}{rgb}{1,1,1}
\definecolor{color20}{rgb}{1,1,1}
\definecolor{color#1}{rgb}{0,0,1}
\definecolor{color#2}{rgb}{0,0,1}
\definecolor{color#3}{rgb}{0,0,1}
\definecolor{color#4}{rgb}{0,0,1}
\definecolor{color#5}{rgb}{0,0,1}
\definecolor{color#6}{rgb}{0,0,1}
\definecolor{color#7}{rgb}{0,0,1}
\definevar{0}{}
\definevar{1}{}
\definevar{2}{}
\definevar{3}{}
\definevar{4}{}
\definevar{5}{}
\definevar{6}{}
\definevar{7}{}
\definevar{8}{}
\definevar{9}{}
\definevar{10}{}
\definevar{11}{}
\definevar{12}{}
\definevar{13}{}
\definevar{14}{}
\definevar{15}{}
\definevar{16}{}
\definevar{17}{}
\definevar{18}{}
\definevar{19}{}
\definevar{20}{}
\ifthenelse{#8>0}{
\redefinevar{#1}{1}
\redefinevar{#2}{2}
\redefinevar{#3}{3}
\redefinevar{#4}{4}
\redefinevar{#5}{5}
\redefinevar{#6}{6}
\redefinevar{#7}{7}
}{}
  \node[dot,fill=color14,text=white] (14) at (-162:1) {\tiny\bfseries\var{14}};
  \node[dot,fill=color15,text=white] (15) at (-90:1) {\tiny\bfseries\var{15}};
  \node[dot,fill=color16,text=white] (16) at (-18:1) {\tiny\bfseries\var{16}};
  \node[dot,fill=color12,text=white] (12) at (54:1) {\tiny\bfseries\var{12}};
  \node[dot,fill=color13,text=white] (13) at (126:1) {\tiny\bfseries\var{13}};
  \node[dot,fill=color7,text=white] (7) at (-162:\s) {\tiny\bfseries\var{7}};
  \node[dot,fill=color5,text=white] (5) at (-90:\s) {\tiny\bfseries\var{5}};
  \node[dot,fill=color17,text=white] (17) at (-18:\s) {\tiny\bfseries\var{17}};
  \node[dot,fill=color11,text=white] (11) at (54:\s) {\tiny\bfseries\var{11}};
  \node[dot,fill=color9,text=white] (9) at (126:\s) {\tiny\bfseries\var{9}};
  \node[dot,fill=color6,text=white] (6) at (-162+36:\t) {\tiny\bfseries\var{6}};
  \node[dot,fill=color4,text=white] (4) at (-90+36:\t) {\tiny\bfseries\var{4}};
  \node[dot,fill=color18,text=white] (18) at (-18+36:\t) {\tiny\bfseries\var{18}};
  \node[dot,fill=color10,text=white] (10) at (54+36:\t) {\tiny\bfseries\var{10}};
  \node[dot,fill=color8,text=white] (8) at (126+36:\t) {\tiny\bfseries\var{8}};
  \node[dot,fill=color2,text=white] (2) at (-162+36:\u) {\tiny\bfseries\var{2}};
  \node[dot,fill=color3,text=white] (3) at (-90+36:\u) {\tiny\bfseries\var{3}};
  \node[dot,fill=color19,text=white] (19) at (-18+36:\u) {\tiny\bfseries\var{19}};
  \node[dot,fill=color20,text=white] (20) at (54+36:\u) {\tiny\bfseries\var{20}};
  \node[dot,fill=color1,text=white] (1) at (126+36:\u) {\tiny\bfseries\var{1}};
\draw[thick] (14) -- (7) (14) -- (15) (14) -- (13)
  (7) -- (8) (7) -- (6) (13) -- (9) (13) -- (12) (15) -- (5) (15) -- (16)
  (5) -- (6) (8) -- (9) (12) -- (16)
  (1) -- (2) (1) -- (8) (1) -- (20) (2) -- (6) (2) -- (3) (3) -- (4) (3) -- (19)
  (4) -- (5) (4) -- (17) (10) -- (9) (10) -- (11) (10) -- (20) (11) -- (12) (11) -- (18)
  (16) -- (12) (16) -- (15) (16) -- (17) (17) -- (18) (18) -- (19) (19) -- (20);
\end{tikzpicture}
}
\newcommand{\newcomment}[4]{%
\newcounter{#2counter}
\expandafter\newcommand\csname #1\endcsname[1]{%
\refstepcounter{#2counter}%
{\color{#4}(#3\arabic{#2counter})}\marginpar{\scriptsize\raggedright\textbf{\color{#4}(#2 \arabic{#2counter}):} ##1}%
}}
\definecolor{darkgreen}{rgb}{0,0.6,0}
\begin{document}
\title{Vietoris--Rips complexes of platonic solids}
\date{\today}
\subjclass[2010]{Primary
                55N31; 
                Secondary %
                51M20, 
                57Q70, 
                05E45} 

\keywords{}

\author[N.~Saleh]{Nada Saleh}
\address{Mathematisches Institut, JLU Gießen, Arndtstr.\ 2, D-35392 Gießen, Germany}
\thanks{N.S.\ was supported through a DAAD scholarship.}
\email{saleh.nada@math.uni-giessen.de}

\author[T.~Titz Mite]{Thomas Titz Mite}
\address{Mathematisches Institut, JLU Gießen, Arndtstr.\ 2, D-35392 Gießen, Germany}
\email{thomas.titz-mite@math.uni-giessen.de}

\author[S.~Witzel]{Stefan Witzel}
\address{Mathematisches Institut, JLU Gießen, Arndtstr.\ 2, D-35392 Gießen, Germany}
\thanks{T.T.M\ and S.W.\ were supported through the DFG Heisenberg project WI 4079/6.}
\email{stefan.witzel@math.uni-giessen.de}

\begin{abstract}
  We determine the homotopy type of the Vietoris--Rips complexes of the (vertex sets of the) platonic solids. The most interesting case is that the Vietoris--Rips complex of the dodecahedron is a wedge of nine $3$-spheres when the parameter is between combinatorial distance $3$ and $4$.
\end{abstract}

\maketitle

The Vietoris--Rips complex $\VR_{r}(X)$ with parameter $r$ of a metric space $X$ provides a topological model for $X$ at scale $r$. It is used in topological data analysis at small scales to estimate the topological information pertaining to a dataset (a point cloud for instance), see for instance \cite{EdelsbrunnerHarer}. At large scales it is used in geometric group theory to analyse the topological finiteness properties of a group, see for instance \cite[Chapitre~4]{GhysdelaHarpe} and \cite{Alonso}. Usually one is interested in the rough behavior at some scale but in very structured situations one may obtain a complete picture, such as in \cite{AdamaszekAdams17} where all Vietoris--Rips complexes of the circle were determined. In this article we determine the Vietoris--Rips complexes of vertex sets of Platonic solids:

\begin{maintheorem}
Let $P$ be a platonic solid with vertex set $P^{(0)}$ and let $0 < \delta_1 < \ldots < \delta_k$ be the distances between pairs of elements of $P^{(0)}$. The homotopy type of $\VR_r(P^{(0)})$ is $\VR_r(P^{(0)}) = \emptyset$ if $r \le 0$, it is $\VR_r(P^{(0)}) \simeq *$ if $r > \delta_k$ and otherwise is as follows:
  \begin{enumerate}
    \item If $P$ is a tetrahedron then
      \[
        \VR_r(P^{(0)}) \simeq \begin{cases}
          \bigvee^3 S^0 & 0 < r \le \delta_1\\
        \end{cases}
      \]
    \item If $P$ is an octahedron then
      \[
        \VR_r(P^{(0)}) \simeq \begin{cases}
          \bigvee^5 S^0 & 0 < r \le \delta_1\\
          S^2 & \delta_1 < r \le \delta_2\\
        \end{cases}
      \]
    \item If $P$ is a cube then
      \[
        \VR_r(P^{(0)}) \simeq \begin{cases}
          \bigvee^7 S^0 & 0 < r \le \delta_1\\
          \bigvee^5 S^1 & \delta_1 < r \le \delta_2\\
          S^3 & \delta_2 < r \le \delta_3\\
        \end{cases}
      \]
    \item If $P$ is an icosahedron then
      \[
        \VR_r(P^{(0)}) \simeq \begin{cases}
          \bigvee^{11} S^0 & 0 < r \le \delta_1\\
          S^2 & \delta_1 < r \le \delta_2\\
          S^5 & \delta_2 < r \le \delta_3\\
        \end{cases}
      \]
    \item If $P$ is a dodecahedron then
      \[
        \VR_r(P^{(0)}) \simeq \begin{cases}
          \bigvee^{19} S^0 & 0 < r \le \delta_1\\
          \bigvee^{11} S^1 & \delta_1 < r \le \delta_2\\
          S^2 & \delta_2 < r \le \delta_3\\
          \bigvee^9 S^3 & \delta_3 < r \le \delta_4\\
          S^9 & \delta_4 < r \le \delta_5\\
        \end{cases}
      \]
  \end{enumerate}
\end{maintheorem}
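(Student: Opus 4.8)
The plan is to identify $\VR_r(P^{(0)})$ with the flag complex of an explicit graph on $P^{(0)}$ for every range of $r$, to settle all but two of these by elementary means, and to treat the two remaining cases---the dodecahedron with $\delta_2<r\le\delta_3$ and with $\delta_3<r\le\delta_4$---by discrete Morse theory. First I would record the reduction. Since the isometry group of $P$ acts transitively on the pairs of vertices at each fixed combinatorial distance in the $1$-skeleton, the Euclidean distance between two vertices is a strictly increasing function of their combinatorial distance, so the two orderings coincide; writing $G_j$ for the graph on $P^{(0)}$ joining vertices at combinatorial distance at most $j$, this gives $\VR_r(P^{(0)})=\operatorname{Cl}(G_j)$ whenever $\delta_j<r\le\delta_{j+1}$, where $\operatorname{Cl}$ denotes the flag (clique) complex and $\delta_0\defeq 0$ (with the convention, forced by the displayed ranges, that a simplex lies in $\VR_r$ iff its diameter is strictly less than $r$). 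The two extremes are immediate: $G_0$ is edgeless, so $\operatorname{Cl}(G_0)$ is $\lvert P^{(0)}\rvert$ points, homotopy equivalent to $\bigvee^{\lvert P^{(0)}\rvert-1}S^0$; and for $j$ equal to the combinatorial diameter $G_j=K_{\lvert P^{(0)}\rvert}$, so $\operatorname{Cl}(G_j)$ is a simplex and contractible. This proves the $0<r\le\delta_1$ and $r>\delta_k$ statements in all cases, hence the whole tetrahedron.

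Next I would dispose of the easy intermediate ranges. For the penultimate range of the octahedron, cube, icosahedron and dodecahedron, $G_{k-1}$ is the complete graph with the perfect matching of antipodal pairs removed, so a clique is exactly a set meeting each antipodal pair at most once, and $\operatorname{Cl}(G_{k-1})$ is the join of $\lvert P^{(0)}\rvert/2$ copies of $S^0$, i.e.\ the boundary of a cross-polytope, homeomorphic to $S^{\lvert P^{(0)}\rvert/2-1}$; this yields $S^2,S^3,S^5,S^9$. For the first intermediate range $\delta_1<r\le\delta_2$ one needs $\operatorname{Cl}(G_1)$. For the cube and the dodecahedron $G_1$ is triangle-free (the cube graph is bipartite of girth $4$, the dodecahedron graph has girth $5$), so $\operatorname{Cl}(G_1)$ is the graph itself; being connected it is homotopy equivalent to $\bigvee^{\lvert E(G_1)\rvert-\lvert P^{(0)}\rvert+1}S^1$, namely $\bigvee^5 S^1$ and $\bigvee^{11} S^1$. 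For the octahedron and the icosahedron every clique of $G_1$ has at most three vertices and every triangle is a $2$-face of the solid (each vertex link of $G_1$ is an induced $4$-cycle, resp.\ $5$-cycle), so $\operatorname{Cl}(G_1)$ is the boundary complex of the solid, homeomorphic to $S^2$. Together with the previous paragraph this completes the octahedron, the cube and the icosahedron, leaving only $\operatorname{Cl}(G_2)$ and $\operatorname{Cl}(G_3)$ for the dodecahedron.

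For these last two I would apply discrete Morse theory to the face poset of $\operatorname{Cl}(G_2)$, resp.\ $\operatorname{Cl}(G_3)$. The strategy is to build an acyclic partial matching that is equivariant---or at least described by a handful of orbit representatives---for the symmetry group $\Aut(P)\cong A_5\times\Z/2$ of order $120$, assembling it from rules of the form ``pair a simplex with the one obtained by adding or removing a distinguished vertex'' applied in a fixed priority order so that acyclicity is automatic. I expect the critical cells to be exactly one vertex together with a single $2$-simplex for $\operatorname{Cl}(G_2)$, and one vertex together with nine $3$-simplices for $\operatorname{Cl}(G_3)$. Forman's theorem then yields a CW model with precisely these cells; a space built from one $0$-cell and one $2$-cell is $S^2$, while a space built from one $0$-cell, no $1$- or $2$-cells and nine $3$-cells is necessarily $\bigvee^9 S^3$. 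A clique count of $G_2$ and $G_3$ gives reduced Euler characteristics $1$ and $-9$ as an independent sanity check, and the Morse model also supplies the simple connectivity that would otherwise require a separate argument.

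The main obstacle is the $G_3$ case of the dodecahedron---the one highlighted in the abstract. One must first pin down the maximal cliques of the $15$-regular graph $G_3$, equivalently the maximal independent sets of its $4$-regular complement (the ``distance $4$ or $5$'' graph), and show they all have exactly four vertices; then one must arrange the entire face poset into a matching cancelling everything except one vertex and nine triangles, keeping the orbit bookkeeping under the order-$120$ group finite and checkable. The $G_2$ case is of the same type but markedly smaller---its clique structure (in which the twelve pentagonal faces appear as $4$-simplices) is easily determined directly---and should be routine by comparison; a viable alternative there is a nerve argument for a suitable contractible cover of $\operatorname{Cl}(G_2)$ indexed by faces and vertices, whose nerve one computes to be $S^2$.
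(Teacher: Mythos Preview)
Your treatment of the easy ranges matches the paper's and is fine. The genuine difficulty is the dodecahedron at $\delta_3<r\le\delta_4$, and there your proposal has both a factual error and a structural gap.

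First, the combinatorics are not what you expect. The maximal cliques of $G_3$ do \emph{not} all have four vertices: there are cliques of size~$7$ (so $6$-simplices in $\operatorname{Cl}(G_3)$), and the acyclic matching the paper constructs has to be specified across dimensions $1$ through~$6$, filling several figures. The ten tetrahedra whose vertices are mutually at distance~$3$ are distinguished, but they are far from being the maximal simplices.

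Second, and more seriously, your plan hinges on finding a discrete Morse function with exactly one critical $0$-cell and nine critical $3$-cells. You give no construction, and there is no a~priori reason a perfect Morse function exists on this particular complex. What one can actually write down (and what the paper does) is a matching whose critical cells are the simplices of $\VR_{\le 2}$ together with \emph{ten} critical tetrahedra $\tau_1,\dots,\tau_{10}$---two $A_5$-orbits of size~$5$, swapped by the antipodal map. Combining with a Morse function on $\VR_{\le 2}\simeq S^2$ leaves one $0$-cell, one $2$-cell, and ten $3$-cells. From this cell count alone one cannot conclude $\bigvee^9 S^3$: if the attaching maps $S^2\to S^2$ had degree~$0$ the result would be $S^2\vee\bigvee^{10}S^3$. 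The essential remaining step---and the real content of this case---is to trace each $\partial\tau_i$ through the deformation retraction and show its image generates $H_2(\VR_{\le 2})\cong\Z$. Only then does one of the ten $3$-cells kill the $2$-cell and leave $\bigvee^9 S^3$. Your proposal tries to absorb this step into the existence of a nine-critical-cell matching, but producing such a matching is equivalent to knowing one $\tau_i$ can be cancelled against the $2$-cell, i.e.\ to already knowing the attaching map has degree~$\pm1$. Note also that nine is coprime to $|\Aut(P)|=120$, so nine critical $3$-simplices cannot form a union of nontrivial orbits, defeating the equivariant bookkeeping you were counting on.

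For the $\delta_2<r\le\delta_3$ case the paper takes a slightly different and more direct route than yours: it matches each simplex $\{x\}\cup N(x)$ with its facet $N(x)$, retracting $\VR_{\le 2}$ onto the subcomplex of simplices supported on pentagonal faces, and then retracts each filled pentagon to a triangulation of the face, landing on the boundary $2$-sphere. Your Morse or nerve approach would also work here, but is more effort than needed.
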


Most of these statements are easy and likely known to experts except for the case of the dodecahedron in the range $(\delta_3,\delta_4]$. This case will occupy most of the article.

It is natural to try to extend this result to all regular polytopes. This is again easy for the simplices, the cross polytopes and the four-dimensional regular polytopes with the exception of the $120$-cell. For higher-dimensional cubes it does not remain true that the Vietoris--Rips complexes are wedges of spheres of a single dimension (as can be seen from homology computations), low dimensions have been studied in \cite{adamaszek}. The $120$-cell has combinatorial diameter $15$ leaving room for many potentially interesting Vietoris--Rips complexes. Other conceivable generalizations are to Archimedean solids or to (vertices of one type in) spherical buildings. 

The article is organized as follows. In Section~\ref{sec:basics} we introduce Vietoris--Rips complexes and basic combinatorial Morse theory and illustrate a difficulty pertaining to combinatorial Morse theory in the presence of symmetry. In Section~\ref{sec:simple} we prove the easy cases of the theorem: everything except for the dodecahedron in the range $\delta_3 < r \le \delta_4$. In Section~\ref{sec:warmup} we give an alternative prove of the cube in the range $\delta_2 < r \le \delta_3$ which on one hand serves as a preparation to the interesting case in the following section, but also generalizes to higher dimensional cubes. Finally, in Section~\ref{sec:main} we prove the main case of the theorem. GAP code for the article is available at \cite{code}.

\textbf{Acknowledgments.} We are grateful to Ralf Köhl for making us think about Vietoris--Rips complexes of spheres and to Henry Adams for suggesting this problem to us and generously sharing his thoughts.

\section{Basics}\label{sec:basics}

\subsection*{Vietoris--Rips complexes}

Let $X$ be a metric space, for instance a subset of $\R^n$ equipped with the standard metric. The \emph{Vietoris--Rips complex} $\VR_{r}(X)$ with parameter $r \ge 0$ is the simplicial complex with vertex set $X$ where a finite subset $F \subseteq X$ is the vertex set of a simplex if $\diam F < r$. We will also consider the complex $\VR_{\le r}(X)$ where simplices are required to satisfy $\diam F \le r$ instead. If $X$ is finite $\VR_r$ and $\VR_{\le r}$ determine each other in an obvious way. While $\VR_{r}$ is the complex usually considered, it will be convenient for us to use $\VR_{\le r}$ instead. The complexes $(\VR_{\le r}(X))_{r \ge 0}$ form an ascending sequence of complexes, leading up to the full simplex on $X$ (in the limit if $X$ has infinite diameter and reaching it has finite diameter). We will need to compare the complexes $\VR_{\le r}(X)$ and $\VR_{\le s}(X)$ for $r < s$ and use combinatorial Morse theory to do so.

\subsection*{Combinatorial Morse theory}
Combinatorial Morse theory was introduced by Forman \cite{forman} but we will refer to Kozlov's \cite{kozlov} formulation.

Let $C$ be a simplicial complex. If $\sigma,\tau \in C$ are simplices we write $\sigma < \tau$ to express that $\sigma$ is a face of $\tau$. If $\sigma$ has codimension one in $\tau$, we say that $\sigma$ is a \emph{facet} of $\tau$, that $\tau$ is a \emph{cofacet} of $\sigma$, and we write $\sigma \prec \tau$. A \emph{(partial) matching} on a simplicial complex $C$ is a relation $\match$ on the simplices $C$ subject to the conditions:
\begin{enumerate}
  \item for each pair $\sigma \match \tau$ of paired simplices we have $\sigma \prec \tau$, and\label{item:facet}
  \item for each simplex $\sigma$ there is at most one facet or cofacet (not both!) $\tau$ such that $\tau \match \sigma$ or $\sigma \match \tau$.\label{item:match}
\end{enumerate}
The simplices $\sigma$ for which a $\tau$ with $\tau \match \sigma$ or $\sigma \match \tau$ exists, are the ones that are \emph{matched} by the matching, the ones that are not are \emph{critical}.
If $\sigma \prec \tau$ but not $\sigma \match \tau$, we write $\tau \unmatch \sigma$. In this way a partial matching turns the Hasse diagram of (the face poset of) $C$ into a directed graph with the edge between $\sigma \prec \tau$ pointing up if the pair is matched and down if it is unmatched. The matching is \emph{acyclic} if this graph contains no directed cycle. Note that if such a cycle does exist, proving the matching non-acyclic, it necessarily has to be of the form
\[
  \sigma_1 \unmatch \tau_1 \match \ldots \sigma_k \unmatch \tau_k \match \sigma_1
\]
and, in particular, be restricted to adjacent dimensions, because there cannot be two consecutive arrows up by condition \eqref{item:match}. The result from combinatorial Morse theory that we will use is:

\begin{theorem}[{\cite[Theorem~10.9]{kozlov}}]\label{thm:morse}
  Let $C$ be a simplicial complex and let $D < C$ be a subcomplex. If there is an acyclic matching on $C$ such that the matched simplices are precisely those not in $D$, then $C$ deformation retracts onto $D$. More precisely the deformation retraction successively retracts $\sigma$ through $\tau$ onto the other facets of $\tau$ for each pair $\sigma \match \tau$.
\end{theorem}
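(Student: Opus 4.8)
The plan is to prove the theorem by induction on the number of matched simplices, reducing it to a chain of elementary collapses. Recall the standard fact that if $\sigma$ is a \emph{free face} of $C$ — it has a unique cofacet $\tau$, which is then automatically a maximal simplex — then $C$ deformation retracts onto the subcomplex $C \setminus \{\sigma,\tau\}$ by pushing $\sigma$ in through $\tau$ onto the remaining facets of $\tau$; this is exactly the local move described in the theorem. So the content to extract is that an acyclic matching whose matched set is $C \setminus D$ can be performed as a sequence of such moves.

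The crux is the following claim: if an acyclic matching on $C$ is nonempty, then it contains a matched pair $\sigma \match \tau$ with $\sigma$ a free face and $\tau$ its (maximal) cofacet. I would prove this by contradiction. Let $d$ be the largest dimension attained by a matched simplex. If $\tau$ is matched with $\dim \tau = d$, then $\tau$ is maximal in $C$: a simplex strictly containing it would have dimension $> d$, hence be critical (by maximality of $d$), hence lie in $D$, forcing $\tau \in D$ because $D$ is a subcomplex — contradicting that $\tau$ is matched. Now take any matched pair $\sigma \match \tau$ with $\dim \tau = d$. If $\sigma$ is free we are done, so suppose $\sigma$ has a second cofacet $\rho \ne \tau$ (necessarily $\dim \rho = d$). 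Then $\rho$ is not critical, since its face $\sigma$ is matched and $D$ is a subcomplex; so $\rho$ is matched, and by maximality of $d$ it is matched to a facet $\rho_1 \prec \rho$, with $\rho_1 \ne \sigma$ because $\sigma$ already has the partner $\tau$. This produces a directed segment $\rho_1 \match \rho \unmatch \sigma \match \tau$ in the directed Hasse diagram. Now $\rho_1 \match \rho$ is again a matched pair with $\dim \rho = d$, so feeding it back into the same dichotomy — either $\rho_1$ is free (done) or it has a second cofacet and the path extends by two more edges — yields, since $C$ is finite, an arbitrarily long directed path and hence a repeated vertex, i.e.\ a directed cycle, contradicting acyclicity. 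I expect this claim to be the main obstacle: one must orchestrate the path extension and invoke acyclicity and the subcomplex hypothesis at precisely the right places.

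Granting the claim, the induction is routine. Use it to pick a matched pair $\sigma \match \tau$ and collapse it: $C$ deformation retracts onto $C' := C \setminus \{\sigma,\tau\}$. Since $\sigma$ and $\tau$ are matched they are not in $D$, so $D \subseteq C'$; the matching restricted to $C'$ is still a partial matching (discarding simplices only removes potential partners), it is still acyclic (the directed Hasse diagram of $C'$ is the induced subdigraph of that of $C$, with the same edge orientations), and its matched set is exactly $C' \setminus D$. By induction $C'$ deformation retracts onto $D$; composing the two retractions gives the one we want, and the base case ($C = D$, no matched simplices) is trivial. Unwinding the construction shows that the total deformation retraction is the successive pushing-in through paired cofacets, as stated.

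A different route is Forman's original one: build a discrete Morse function compatible with the matching and show that two sublevel complexes with no critical cell strictly between them are simple-homotopy equivalent. This works as well but requires constructing the Morse function and analyzing level sets, which is more laborious than the direct collapsing argument sketched above.
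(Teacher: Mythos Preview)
The paper does not give its own proof of this theorem: it is quoted as a black box from Kozlov's book \cite[Theorem~10.9]{kozlov}, so there is nothing to compare your argument against on the paper's side.

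That said, your argument is correct and is essentially the standard one. The key claim --- that a nonempty acyclic matching whose critical set is a subcomplex always contains a pair $\sigma \match \tau$ with $\sigma$ a free face of $C$ --- is established correctly: your use of the subcomplex hypothesis (to force a second cofacet $\rho$ of $\sigma$ to be matched) and of the maximality of $d$ (to force $\rho$ to be matched downward) are exactly the right moves, and the path-extension then produces a directed cycle unless a free face is found. The induction on the number of matched pairs is routine, as you say. One small caveat: your repetition argument tacitly assumes $C$ is finite, which the theorem as stated does not; but every application in the paper is to a finite complex, so this is harmless here.
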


Considering only simplices in adjacent dimensions makes combinatorial Morse theory very elegant but leads to nuisances in the presence of symmetry. To explain what we mean by that we consider a $3$-simplex with the indicated rotational symmetry, while ignoring its remaining symmetry:

\begin{center}
\begin{tikzpicture}[xshift=3cm,x={(1cm,-.1cm)},y={(.4cm,.18cm)},z={(0cm,1cm)}]
\pgfmathsetmacro{\sqrthalf}{sqrt(1/2)}
\coordinate (p) at (1,0,-\sqrthalf);
\coordinate (q) at (-1,0,-\sqrthalf);
\coordinate (r) at (0,1,\sqrthalf);
\coordinate (s) at (0,-1,\sqrthalf);
\draw (p) -- (q);
\draw (p) -- (r);
\draw (p) -- (s);
\draw[dashed] (q) -- (r);
\draw (q) -- (s);
\draw (r) -- (s);
\foreach \a in {p,q,r,s}
{
  \node[dot,fill=white] at (\a) {$\a$};
}
\tikzset{xyplane/.style={canvas is xy plane at z=#1}}
\begin{scope}[xyplane=-1.5]
\newcommand{\eps}{5}
  \draw[->] (0+\eps:1) arc (0+\eps:180-\eps:1);
  \draw[->] (180+\eps:1) arc (180+\eps:360-\eps:1);
\end{scope}
\end{tikzpicture}
\end{center}

Obviously the simplex can be deformation retracted to the bottom edge (which has codimension two) in an equivariant way, for instance by a straight-line retraction that moves $r$ to $q$ and $s$ to $p$. This collapse from the three-dimensional simplex to its one-dimensional face can be decomposed into collapses in adjacent dimensions but these collapses cannot be equivariant. The best we can do is to do one collapse that destroys symmetry followed by another that restores symmetry and so on. For instance we can collapse $prs \match pqrs$ destroying symmetry and then $rs \match qrs$ restoring symmetry. Then $pr \match pqr$ destroying symmetry and $qs \match pqs$ restoring it. Finally, $s \match ps$ destroying symmetry and $r \match qr$ restoring it.

\begin{table}
\begin{tabular}{l|C|C|C|}
  Polytope & \text{combinatorial} & \text{euclidean} & \text{spherical}\\
  \hline
  \multirow{1}{*}{Tetrahedron} & 1 & 2\sqrt{\frac{2}{3}} & \arccos\left(-\frac{1}{3}\right)\\
  \hline
  \multirow{3}{*}{Cube} & 1 & 2\sqrt{\frac{1}{3}} & \arccos\left(\frac{1}{3}\right)\\
                        & 2 & 2\sqrt{\frac{2}{3}} & \arccos\left(-\frac{1}{3}\right)\\
                        & 3 & 2 & \pi\\
  \hline
  \multirow{2}{*}{Octahedron} & 1 & \sqrt{2} & \pi/2\\
                              & 2 & 2 & \pi\\
  \hline
  \multirow{5}{*}{Dodecahedron} & 1 & \frac{2}{\varphi}\sqrt{\frac{1}{3}} & \arccos\left(\frac{2\varphi -1}{3}\right)\\
                                & 2 & 2\sqrt{\frac{1}{3}} & \arccos\left(\frac{1}{3}\right)\\
                                & 3 & 2\sqrt{\frac{2}{3}} & \arccos\left(-\frac{1}{3}\right)\\
                                & 4 & 2\varphi\sqrt{\frac{1}{3}} &\arccos\left(\frac{1 - 2\varphi}{3}\right)\\
                                & 5 & 2 & \pi \\
  \hline
  \multirow{3}{*}{Icosahedron} & 1 & 2 \sqrt{\frac{3 - \varphi}{5}}& \arccos\left(\frac{1}{\sqrt{5}}\right)\\
                               & 2 & 2 \sqrt{\frac{2 + \varphi}{5}} & \arccos\left(-\frac{1}{\sqrt{5}}\right)\\
                               & 3 & 2 & \pi\\
  \hline
\end{tabular}
\bigskip
\caption{Translation between combinatorial, euclidean, and spherical distance. Euclidean and spherical distance refer to the polytope inscribed in the unit sphere.}
\label{tab:distances}
\end{table}

\subsection*{Platonic solids} We may realize any platonic solid $P$ to have vertex set $P^{(0)}$ on the unit sphere $S^2 \subseteq \R^3$. This provides us with three possible distances on the set of vertices: the distance can be measured in the Euclidean space $\R^3$, on the sphere $S^2$, or combinatorially by counting the number of edges between them (Table~\ref{tab:distances}). Since these distances depend on each other monotonously, the same Vietoris--Rips complexes arise just for different parameters. It will be convenient to work with combinatorial distance, not only because it takes integer values but also because it allows us to work with the edge graph of the polytopes.

\section{Simple cases}\label{sec:simple}

Throughout we will consider a platonic solid $P$ and let $m,n,v,f,k$ denote its fundamental invariants as indicated in Table~\ref{tab:invariants}.
We want to determine the homotopy type of the Vietoris--Rips complexes $\VR_{\le r}(P^{(0)})$ with respect to combinatorial distance $r \in [0,\ldots,k]$ where $k$ the combinatorial diameter.

\begin{table}
  \centering
\begin{tabular}{l|ccccc|}
  Polytope & m & n & v & f & k\\
\hline
  Tetrahdron & 3 & 3 & 4 & 4 & 1\\
  Cube & 4 & 3 & 8 & 6 & 3\\
  Octahedron & 3 & 4 & 6 & 8 & 2\\
  Dodecahedron & 5 & 3 & 20 & 12 & 5\\
  Icosahedron & 3 & 5 & 12 & 20 & 3\\
  \hline
\end{tabular}
\bigskip
\caption{Basic combinatorics of the platonic solids. The numbers $m$ and $n$ are the vertices around a facet and the facets around a vertex, respectively. The numbers $v$ and $f$ are the numbers of vertices and facets respectively. The number $k$ is the diameter of the edge graph.}
\label{tab:invariants}
\end{table}

The first case is trivial.

\begin{lemma}\label{lem:vertices}
  If $0 \le r < 1$ then $\VR_{\le r}$ is the discrete set of $v$ vertices, so it is $\bigvee^{v-1} S^0$.\qed
\end{lemma}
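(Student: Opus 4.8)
The plan is simply to unwind the definitions; there is no real content beyond that. First I would observe that, since we work with combinatorial distance, $d(x,y) \in \Z_{\ge 1}$ for any two distinct vertices $x,y \in P^{(0)}$, because an edge path in the edge graph between distinct vertices has length at least $1$. Consequently, if $F \subseteq P^{(0)}$ has $\abs{F} \ge 2$, then $\diam F \ge 1 > r$, so $F$ does not span a simplex of $\VR_{\le r}(P^{(0)})$. Since every singleton (and the empty set) trivially satisfies $\diam F \le r$, the simplices of $\VR_{\le r}(P^{(0)})$ are precisely the subsets of cardinality at most one; that is, $\VR_{\le r}(P^{(0)})$ is the discrete complex on the $v$ vertices of $P$.

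It then remains to identify the homotopy type of a $v$-point discrete space. I would recall that $S^0$ is the two-point space and that forming a wedge of $j$ copies of $S^0$ (glued along one chosen point from each) yields a space with $j+1$ points; hence $\bigvee^{v-1} S^0$ is homeomorphic to the discrete space on $v$ points. Therefore $\VR_{\le r}(P^{(0)})$ is homeomorphic, and in particular homotopy equivalent, to $\bigvee^{v-1} S^0$.

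There is no substantive obstacle: the argument is entirely formal, the only point needing minor care being the off-by-one in the number of wedge summands, which is settled by the cardinality count above. This lemma records the base case $r < 1$ underlying the first line of each of the five items of the main theorem.
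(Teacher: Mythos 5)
Your argument is correct and is exactly the (trivial) unwinding of definitions that the paper intends by marking the lemma with \qed and giving no proof: distinct vertices have combinatorial distance at least $1 > r$, so only singletons are simplices, and a $v$-point discrete space is $\bigvee^{v-1} S^0$. No issues.
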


\begin{lemma}\label{lem:cross_polytope}
  If $P = -P$ (so if $P$ is not the regular simplex) and if $k-1 \le r < k$ then $\VR_{\le r} \cong S^{v/2-1}$.
\end{lemma}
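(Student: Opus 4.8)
The plan is to recognize $\VR_{\le r}(P^{(0)})$ as the boundary complex of a cross-polytope. Since $P = -P$, the antipodal map $x \mapsto -x$ restricts to a fixed-point-free involution of $P^{(0)}$, because the origin (the center of $P$) is not a vertex; hence $P^{(0)}$ is partitioned into $v/2$ antipodal pairs $\{x_i,-x_i\}$, $i = 1,\dots,v/2$.

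The first and only nontrivial point is the claim that two vertices of $P$ are at combinatorial distance exactly $k$ if and only if they are antipodal. To see this, I would use that for the vertices of $P$ inscribed in $S^2$ the combinatorial distance is a strictly increasing function of the Euclidean distance, which one simply reads off from Table~\ref{tab:distances}. Consequently a pair of vertices realizes the combinatorial diameter $k$ if and only if it realizes the maximal Euclidean distance occurring among vertices of $P$. But for unit vectors $x,y \in S^2$ one has $\|x-y\|^2 = 2 - 2\langle x,y\rangle \le 4$ with equality if and only if $y = -x$, so the pairs at maximal Euclidean distance are precisely the antipodal pairs.

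Granting this, the proof concludes quickly. Combinatorial distances are integers and $k-1 \le r < k$, so for a finite $F \subseteq P^{(0)}$ we have $\diam F \le r$ iff $\diam F \le k-1$ iff $F$ contains no antipodal pair iff $F$ meets each class $\{x_i,-x_i\}$ in at most one point. Therefore $\VR_{\le r}(P^{(0)})$ is exactly the $(v/2)$-fold join of the two-point discrete complexes $\{x_i,-x_i\} \cong S^0$, i.e.\ the boundary complex of the $(v/2)$-dimensional cross-polytope. Since a join of $v/2$ copies of $S^0$ is homeomorphic to $S^{v/2-1}$, we obtain $\VR_{\le r}(P^{(0)}) \cong S^{v/2-1}$, as claimed.

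The main (and essentially only) obstacle is the distance characterization in the second paragraph. If one prefers to avoid the reference to Euclidean distances, it can be replaced by a direct inspection of the four relevant edge graphs (those of the cube, octahedron, icosahedron and dodecahedron), each of which is antipodal with antipodal classes of size two.
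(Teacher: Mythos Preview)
Your proof is correct and follows essentially the same route as the paper: identify the distance-$k$ pairs as exactly the antipodal pairs, conclude that the complex is the join of $v/2$ copies of $S^0$ (equivalently, the boundary of the $(v/2)$-cross-polytope), hence $S^{v/2-1}$. The only difference is that the paper simply asserts that the unique vertex at combinatorial distance $k$ from $x$ is $-x$, whereas you supply a justification via the monotone correspondence between combinatorial and Euclidean distance; this extra care is fine and changes nothing structurally.
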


\begin{proof}
  For $x \in P^{(0)}$ the unique point at distance $k$ from $x$ is $-x$. So a face of the full simplex on $P^{(0)}$ is in $\VR_r$ unless it contains two opposite vertices. From this description we see that $\VR_r$ is (simplicially isomorphic to) $\bigast_v S^0$ and also the boundary of the $v$-dimensional cross polytope. Either description shows that $\VR_r$ is homeomorphic to $S^{v/2-1}$.
\end{proof}

The next case is again trivial.

\begin{lemma}
  If $m = 3$, $1 < k$ and $1 \le r < 2$ then $\VR_{\le r} = P^{(2)}$, the two-skeleton, so in particular $\VR_r \cong S^2$.\qed
\end{lemma}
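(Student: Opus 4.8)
The plan is to recognize $\VR_{\le r}(P^{(0)})$ for $1 \le r < 2$ as the clique complex (flag complex) of the edge graph $G$ of $P$, and then to identify that clique complex with $P^{(2)}$.

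First, since combinatorial distance is integer-valued, a finite set $F \subseteq P^{(0)}$ satisfies $\diam F \le r$ (for $1 \le r < 2$) if and only if its vertices are pairwise at distance $\le 1$, i.e.\ if and only if $F$ is a clique in $G$. So $\VR_{\le r}(P^{(0)})$ is the flag complex of $G$. Note that $m = 3$ makes every $2$-face of $P$ a triangle, so $P^{(2)}$ is an honest simplicial complex, and by Table~\ref{tab:invariants} the hypotheses $m = 3$ and $k > 1$ single out the octahedron and the icosahedron, the two solids we must treat.

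One inclusion is free: vertices and edges of $P$ are cliques of size $\le 2$, and the three vertices of any $2$-face are pairwise adjacent, so $P^{(2)} \subseteq \VR_{\le r}$. For the reverse inclusion I would exploit the local structure: the link of a vertex $v$ in $\partial P$ is an $n$-gon, and for the octahedron and icosahedron this $n$-gon is precisely the subgraph of $G$ induced on the neighbors of $v$ (there are no extra edges of $P$ joining two neighbors of $v$). Since $m = 3$ forces $n \ge 4$ in our two cases, this $n$-gon is triangle-free. It follows that $G$ contains no $K_4$ (a $K_4$ through $v$ would produce a triangle among the neighbors of $v$), so all cliques have size $\le 3$; and if $\{u,v,w\}$ is a $3$-clique, then $uw$ is an edge of the link of $v$, so $uvw$ spans a $2$-face of $\partial P$. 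Hence $\VR_{\le r} = P^{(2)}$. Finally, $P$ is a convex $3$-polytope, so $P^{(2)} = \partial P$ is homeomorphic to $S^2$, giving $\VR_r \cong S^2$.

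The one thing that really needs checking is the claim that in the octahedron and icosahedron the only edges of $P$ among the neighbors of a vertex are those of the link cycle; this is the (minor) crux. For the octahedron it is transparent from the coordinates $\{\pm e_1, \pm e_2, \pm e_3\}$: the neighbors of $e_1$ are the four vertices other than $\pm e_1$, and they form a $4$-cycle. For the icosahedron one checks that the edge graph is locally a $5$-cycle, so it has exactly $12 \cdot 5 / 3 = 20$ triangles, all of which are faces; a short verification using the standard coordinates (cyclic permutations of $(0, \pm 1, \pm \varphi)$) confirms this.
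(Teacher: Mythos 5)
Your proof is correct and is exactly the argument the paper intends: the authors state this lemma with no proof (marking it trivial), and the intended justification is precisely that $\VR_{\le r}$ at this range is the clique complex of the edge graph, which for the octahedron and icosahedron coincides with the simplicial $2$-sphere $\partial P = P^{(2)}$ because the vertex links are chordless $4$- and $5$-cycles. Your verification of that last point (no extra edges among the neighbors of a vertex, hence no $K_4$ and every triangle is a face) is the one detail worth writing down, and you have it right.
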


The case $m \ne 3$ is slightly more interesting. Note that for these two polytopes the (combinatorial) diameter of a facet is $2$.

\begin{lemma}
  If $m > 3$ and $1 \le r < 2$ then $\VR_{\le r}= P^{(1)}$, so in particular $\VR_{\le r} \cong \bigvee_{f-1} S^1$.\qed
\end{lemma}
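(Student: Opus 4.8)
The plan is to identify $\VR_{\le r}(P^{(0)})$ with the flag (clique) complex of the edge graph $P^{(1)}$ and then to observe that for the polytopes in question this flag complex is one-dimensional. Since combinatorial distance is integer-valued, the hypothesis $1 \le r < 2$ means that a finite set $F \subseteq P^{(0)}$ satisfies $\diam F \le r$ exactly when $\diam F \le 1$, i.e.\ exactly when $F$ is a clique of $P^{(1)}$. Hence $\VR_{\le r}(P^{(0)})$ is precisely the clique complex of the graph $P^{(1)}$, and it contains $P^{(1)}$ as its $1$-skeleton.

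Next I would check that $P^{(1)}$ is triangle-free, so that the clique complex coincides with $P^{(1)}$ itself. Among the platonic solids the condition $m > 3$ selects the cube and the dodecahedron, and in both cases the edge graph has no $3$-cycles: the cube graph is bipartite, and the dodecahedron graph has girth $5$. (Conceptually, $m > 3$ already forces triangle-freeness: if $u,v,w$ were pairwise adjacent then $u$ and $w$ would be two neighbours of $v$ joined by an edge, hence consecutive vertices of a common facet $F \ni v$; as $F$ is a polygon with at least four sides, $uw$ would be a chord of $F$ that is simultaneously an edge of $P$, which is impossible.) Therefore $\VR_{\le r}(P^{(0)}) = P^{(1)}$.

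Finally, $P^{(1)}$ is a connected graph with $v$ vertices and $e$ edges, so it is homotopy equivalent to a wedge of $e - v + 1$ circles (collapse a spanning tree, which is a contractible subcomplex). Euler's formula $v - e + f = 2$ for the boundary sphere of $P$ gives $e - v + 1 = f - 1$, whence $\VR_{\le r}(P^{(0)}) \simeq \bigvee^{f-1} S^1$. The only point requiring any attention is the triangle-freeness of $P^{(1)}$; everything else is formal, and one may equally well just inspect the two relevant graphs directly.
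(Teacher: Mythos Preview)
Your proof is correct and supplies exactly the details the paper omits (the lemma is stated with a bare \qed\ there, as the authors regard it as immediate). The identification of $\VR_{\le r}$ with the clique complex of $P^{(1)}$, the triangle-freeness check, and the Euler-formula count $e-v+1=f-1$ are all sound.

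One small remark on your parenthetical conceptual argument: the step ``$u$ and $w$ are two neighbours of $v$, hence lie in a common facet with $v$'' is not automatic from $m>3$ alone; it uses that $n=3$ (so that any two of the three neighbours of $v$ are consecutive around $v$). This happens to hold for both solids in question, and in any case your direct verification (cube bipartite, dodecahedron of girth $5$) already settles the matter.
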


\begin{lemma}
  For the dodecahedron, if $2 \le r < 3$ then $\VR_{\le r} \simeq S^2$. More precisely every triangulation of $P^{(2)}$ without added vertices arises as a strong deformation retract of $\VR_{\le r}$.
\end{lemma}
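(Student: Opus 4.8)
The plan is to fix an arbitrary triangulation $T$ of $P^{(2)}$ with no added vertices — so $T$ triangulates the $2$-sphere $P^{(2)}$ — and to produce an acyclic matching on $\VR_{\le r}$ whose critical simplices are exactly the simplices of $T$. Theorem~\ref{thm:morse} then yields a deformation retraction of $\VR_{\le r}$ onto $T$, and since $T$ was arbitrary this proves both assertions at once. Note that for $2 \le r < 3$ one has $\VR_{\le r} = \VR_{\le 2}$ because combinatorial distances are integers, and that any two vertices lying on a common pentagonal face are at distance $1$ or $2$; hence $T$, obtained by triangulating each face using only its five vertices, really is a subcomplex of $\VR_{\le 2}$.

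The first step is to determine the simplices of $\VR_{\le 2}$, i.e.\ the subsets of $P^{(0)}$ of combinatorial diameter at most $2$. A short inspection of the edge graph of the dodecahedron — confirmed by the computation \cite{code} — shows that the maximal such subsets are the twelve pentagonal faces, each spanning a $4$-simplex $\Delta_F$, together with the twenty sets $\{v\}\cup N(v)$, where $N(v)$ denotes the set of the three neighbours of $v$, each spanning a tetrahedron; and that the only simplices of $\VR_{\le 2}$ not contained in a single face are these twenty tetrahedra together with their ``cross-face'' triangular facets $N(v)$. It is also useful to record two elementary facts: three vertices of a pentagon determine that pentagon (so any triangle, and any tetrahedron, that is contained in a face is contained in a \emph{unique} one), and any edge of $\VR_{\le 2}$ is either an edge of $P$ or a diagonal of a unique pentagon.

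Next I would build the matching. Each of the five triangulations of a pentagon is a fan, so for every face $F$ the triangulated pentagon $T_F := T\cap\Delta_F$ is the cone with some apex $w_F \in F$ over a path $L_F$ on the other four vertices of $F$. Writing $\Delta_F$ as the cone with apex $w_F$ over the tetrahedron $\Delta_F'$ on those four vertices, collapsing $\Delta_F'$ onto $L_F$ by a short explicit sequence of elementary collapses, and then coning the collapse with $w_F$, produces a collapse of $\Delta_F$ onto $T_F$ that fixes the boundary pentagon $\partial F \subseteq T$. Take the twelve resulting acyclic matchings, one inside each $\Delta_F$, and in addition, for every vertex $v$, match the triangle $N(v)$ with the tetrahedron $\{v\}\cup N(v)$. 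By the classification above this matches precisely the simplices outside $T$, so the critical simplices are exactly those of $T$.

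The step I expect to be the main obstacle is checking that this combined matching is well-defined and acyclic. Well-definedness follows once one notes that $\{v\}\cup N(v)$ is a maximal simplex, that $N(v)$ is a facet of no simplex other than $\{v\}\cup N(v)$, and that the facial collapses, being supported inside the $\Delta_F$, never involve the non-facial simplices $N(v)$ or $\{v\}\cup N(v)$. For acyclicity, the twelve facial matchings live on pairwise disjoint families of simplices (two faces meet only in simplices of $\partial P$, which remain critical) and each is acyclic because it comes from a collapse; moreover no alternating cycle can pass through a non-facial simplex, since $\{v\}\cup N(v)$ could occur in such a cycle only as one of the ``$\sigma_i$'', forcing the preceding ``$\tau_{i-1}$'' to be $N(v)$, which is impossible as $N(v)$ has no unmatched cofacet. (This is the point that requires care: the facet relations between $\{v\}\cup N(v)$ and the facial triangles $\{v\}\cup\{n,n'\}$ with $n,n'\in N(v)$ might a priori link a facial matching to a non-facial pair, but they do not.) Using that each matched facial simplex lies in a unique $\Delta_F$, any alternating cycle would therefore be confined to a single $\Delta_F$, contradicting acyclicity of its collapse. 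Hence Theorem~\ref{thm:morse} applies and $\VR_{\le r}$ deformation retracts onto $T$.
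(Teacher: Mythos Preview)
Your proof is correct and follows essentially the same strategy as the paper's: match each $N(v)$ with $\{v\}\cup N(v)$ to dispose of the non-facial simplices, then collapse each face simplex $\Delta_F$ onto the chosen triangulation $T_F$. The paper carries these out as two successive retractions and defers the second to the general polygon Lemma~\ref{lem:polygon_retraction}, whereas you combine them into a single acyclic matching and exploit the pleasant accident that every triangulation of a pentagon is a fan to build the facial collapse directly by coning.
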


\begin{proof}
  There are two kinds of maximal simplices in $\VR_{\le r}$: the first kind, call it $\sigma_x$, consists of a vertex $x$ of $P$ and its $m = 3$ neighbors, so in particular it is a $3$-simplex. The second kind, call it $\tau_F$, consists of the vertices of a facet $F$ of $P$, so it is an $(m-1)$-simplex. Matching $\sigma_x \setminus \{x\}$ with $\sigma_x$ for all $x$ describes a deformation retraction of $\VR_r$ onto the subcomplex of simplices each of which is contained some facet of $P$.

  It remains to see that $\tau_F$ deformation retracts to any triangulation of $F$. This is shown in the following lemma.
\end{proof}

When speaking of a triangulation of a polygon $F$ we shall always mean one whose vertex set is the vertex set of $F$. From the following lemma we only need the statement proved in the first paragraph (because we only care for $n$-gons with $n \le 5$), but we prove it in general.

\begin{lemma}\label{lem:polygon_retraction}
  Let $F \subseteq \R^2$ be a regular polygon, let $T$ be a triangulation of $F$, and let $\Delta$ be the full simplex on the vertex set of $F$. Regarding $T$ as a subcomplex of $\Delta$ there is a deformation retraction from $\Delta$ to $T$.
\end{lemma}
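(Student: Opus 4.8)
The claim is: if $F \subseteq \R^2$ is a regular polygon with $n$ vertices, $T$ is a triangulation of $F$ (using only the vertices of $F$), and $\Delta$ is the full simplex on those $n$ vertices, then $\Delta$ deformation retracts onto $T$. I want to prove this via combinatorial Morse theory (Theorem \ref{thm:morse}): it suffices to produce an acyclic matching on $\Delta$ whose critical simplices are exactly the simplices of $T$. Equivalently, I need an acyclic matching on the simplices of $\Delta \setminus T$.

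The plan is to induct on $n$. For small $n$ ($n \le 3$) we have $\Delta = T$ and there is nothing to do. For the inductive step, pick an "ear" of the triangulation $T$: since any triangulation of a polygon has at least two ears, choose a vertex $w$ such that the triangle $uvw$ is a face of $T$, where $u,v$ are the two neighbors of $w$ along the polygon boundary, and such that $uv$ is the only edge of $T$ incident to $w$ besides $uw$ and $vw$. Let $F'$ be the polygon on the remaining $n-1$ vertices (replace the two boundary edges $uw, vw$ by the chord $uv$) and $T' = T \setminus \{w\} = T \setminus \{uvw, uw, vw, w\}$, which is a triangulation of $F'$; let $\Delta'$ be the full simplex on those $n-1$ vertices. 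By induction, there is an acyclic matching $M'$ on $\Delta'$ with critical cells exactly $T'$. Now I extend $M'$ to $\Delta$ as follows: every simplex of $\Delta$ either does not contain $w$ (these form $\Delta'$, and I use $M'$ on them) or contains $w$. For the simplices containing $w$, I match $\sigma$ with $\sigma \cup \{w\}$ whenever $\sigma$ is a simplex of $\Delta'$ that is \emph{not} contained in the edge $\{u,v\}$; this pairs up all simplices containing $w$ except $\{w\}, \{u,w\}, \{v,w\}, \{u,v,w\}$, i.e. except the four faces of the ear $uvw$, which I leave critical. The critical cells of the combined matching are therefore $T' \cup \{w, uw, vw, uvw\} = T$, as desired.

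The remaining point is \textbf{acyclicity of the combined matching}, which is the main thing to check. A directed cycle in the modified Hasse diagram would have to alternate between simplices containing $w$ and their facets; I would argue it cannot cross back and forth between the "$w$-part" and $\Delta'$. Concretely, an up-arrow (matched pair) in the $w$-part goes $\sigma \to \sigma \cup \{w\}$ with $\sigma \subseteq \Delta'$, $\sigma \not\subseteq \{u,v\}$; a down-arrow from a simplex $\rho \ni w$ goes to a facet, which either still contains $w$ (it's $\rho \setminus \{x\}$ for some $x \neq w$, hence again not matched \emph{downward} into $\Delta'$) or is $\rho \setminus \{w\} \subseteq \Delta'$. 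So once a directed path enters $\Delta'$ it stays in $\Delta'$ (the only up-arrows leaving $\Delta'$ emanate from non-critical cells which are matched \emph{within} $\Delta'$ by $M'$ — wait, they're matched upward into the $w$-part); I need to be a little careful here, and the clean way is the standard "acyclic matching glued along a subcomplex" lemma: $\Delta'$ is a subcomplex of $\Delta$, $M'$ is acyclic on it, and the complementary cells $\{\sigma \cup \{w\} : \ldots\} \cup \{\text{partners}\}$ together with the four critical ear-faces are handled by a matching on the "star of $w$" part that is acyclic because it is essentially a cone matching. I expect the cleanest writeup invokes the Cluster/Patchwork lemma for acyclic matchings (e.g. \cite[Theorem~11.10]{kozlov} or the gluing lemma), with the poset map sending a simplex to $0$ if it avoids $w$ and to $1$ if it contains $w$: acyclicity on each fiber plus acyclicity of the base ($0 < 1$) gives acyclicity overall. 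Checking acyclicity on the fiber over $1$ (the cells containing $w$, matched by "add/remove a point other than $w$, away from the ear") is a routine cone-type argument.

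The main obstacle is thus purely bookkeeping: making sure the ear $uvw$ can always be chosen so that $uv$ is genuinely a chord of $F$ (which holds for $n \ge 4$ since the two ears of a triangulation are distinct) and that no simplex gets matched twice when gluing $M'$ to the star-of-$w$ matching — this is exactly condition \eqref{item:match}, and it holds because the $w$-containing cells are disjoint from $\Delta'$. No geometry of the regular polygon is actually used beyond "$F$ is an $n$-gon and $T$ triangulates it", so the statement could be stated for arbitrary convex polygons; I will remark on this but keep the hypothesis as stated.
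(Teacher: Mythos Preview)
Your approach (induct on $n$ by cutting an ear) is different from the paper's, which first treats the ``fan'' triangulation $T_v$ (one vertex $v$ joined to all others) via the cone matching $\sigma \leftrightarrow \sigma \cup \{v\}$ on the simplices not in $T_v$, and then reaches an arbitrary $T$ by Wagner's flip-connectivity of triangulations, modifying the matching locally at each flip.

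More importantly, your argument has a real gap. The matching you write down, $\sigma \match \sigma \cup \{w\}$ for $\sigma \in \Delta'$ with $\sigma \not\subseteq \{u,v\}$, pairs a cell $\sigma$ \emph{of $\Delta'$} with a cell containing $w$. But you have already put $M'$ on all of $\Delta'$. Hence every such $\sigma$ is matched twice --- once by $M'$ (if $\sigma \notin T'$) and once with $\sigma \cup \{w\}$ --- violating condition~\eqref{item:match}; and every $\sigma \in T'$ with $\sigma \not\subseteq \{u,v\}$, which you want critical, is in fact matched upward with $\sigma \cup \{w\}$, so your critical set is not $T$. Your closing check (``it holds because the $w$-containing cells are disjoint from $\Delta'$'') misses that the \emph{partners} of those cells lie in $\Delta'$. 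The Patchwork lemma you reach for is the right repair, but only once the matching on the fiber over~$1$ is made \emph{internal} to that fiber: you need an acyclic matching on $\{\tau : w \in \tau\}$, pairing $w$-cells with $w$-cells, whose critical cells are exactly $w, uw, vw, uvw$. Your rule $\sigma \match \sigma \cup \{w\}$ is not such a matching. One that works: pick $z \notin \{u,v,w\}$ and match $\tau \leftrightarrow \tau \bigtriangleup \{z\}$ among the $w$-cells with $\tau\setminus\{w\}\not\subseteq\{u,v\}$; this leaves the four cells $zw, zuw, zvw, zuvw$ over, and those can then be paired off by toggling $u$. Until this (or something equivalent) is spelled out, the proof is incomplete.
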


\begin{proof}
  First consider the special case where $T$ is the triangulation for which one fixed vertex $v$ is connected by an edge with each of the other vertices. We define a matching on the simplices of $\Delta$ not in $T$ by pairing a simplex $\sigma$ that contains $v$ with the complementary facet $\sigma \setminus v$. This matching is acyclic because if $\sigma \match \tau$ then $\tau$ contains $v$ and $\sigma$ does not while if $\tau \unmatch \sigma$ then both $\sigma$ and $\tau$ contain $v$. So a cycle would have to but cannot involve a simplex that does not contain $v$. Using Theorem~\ref{thm:morse} this shows that $\Delta$ deformation retracts onto $T$.

  It is well-known, \cite{wagner}, that any triangulation $T$ of $F$ can be transformed into any other triangulation $T'$ by a sequence of flips which consist in replacing a quadrangle $\gamma$ that is triangulated by a diagonal $\delta$ and triangles $\alpha, \beta$ by that quadrangle with its other triangulation which has diagonal $\delta'$ and triangles $\alpha',\beta'$:

  \begin{center}
  \begin{tikzpicture}[scale=1.3]
    \begin{scope}
      \node[dot] (a) at (5:.8) {};
      \node[dot] (b) at (80:1.1) {};
      \node[dot] (c) at (190:1.3) {};
      \node[dot] (d) at (-100:.8) {};
      \draw[thick] (a) -- (b) -- (c) -- (d) -- (a);
      \draw[thick] (a) -- (c);
      \node[fill=white] at ($(a)!.5!(c)$) {$\delta$};
      \coordinate (ab) at ($(a)!.5!(b)$);
      \coordinate (ad) at ($(a)!.6!(d)$);
      \node at ($(c)!.67!(ab)$) {$\alpha$};
      \node at ($(c)!.75!(ad)$) {$\beta$};
    \end{scope}
    \node at (1.8,0) {$\leadsto$};
    \begin{scope}[xshift=4cm]
      \node[dot] (a) at (5:.8) {};
      \node[dot] (b) at (80:1.1) {};
      \node[dot] (c) at (190:1.3) {};
      \node[dot] (d) at (-100:.8) {};
      \draw[thick] (a) -- (b) -- (c) -- (d) -- (a);
      \draw[thick] (b) -- (d);
      \node[fill=white] at ($(b)!.5!(d)$) {$\delta'$};
      \coordinate (cd) at ($(c)!.5!(d)$);
      \coordinate (ad) at ($(a)!.5!(d)$);
      \node at ($(b)!.67!(cd)$) {$\alpha'$};
      \node at ($(b)!.67!(ad)$) {$\beta'$};
    \end{scope}
  \end{tikzpicture}
  \end{center}

  Thus given a matching on simplices of $\Delta$ not in $T$ it suffices to remove $\alpha',\beta',\delta'$ from the matching and add $\alpha,\beta,\delta$ to it. In the original matching the $3$-simplex corresponding to $\gamma$ has to be matched with one of $\alpha'$ and $\beta'$ and $\delta'$ with the other of the two. Removing these two matches from the matching and adding instead the corresponding matches without primes leads to the required matching. The new matching is acyclic because in the resulting matching the edges of the quadrangle $\tau$ are unmatched as are all triangles except for the one matched with the simplex corresponding to $\tau$. Thus every maximal $\match$-$\unmatch$-sequence involving $\alpha,\beta,\delta$ ends in an unmatched edge or an unmatched triangle and, in particular, does not cycle.
\end{proof}

At this point we have proven the theorem except for the case of the dodecahedron with $\delta_3 < r \le \delta_4$.

\section{Warmup: the cube}\label{sec:warmup}

We could now proceed directly to the proof of the remaining case and the reader is free to do so in the next section. However, we choose to first give an alternative proof of the fact that if $P$ is the cube then the complex $X \defeq \VR_{\le 2}(P^{(0})$ is a $3$-sphere (this follows from Lemma~\ref{lem:cross_polytope} above). We do so for two reasons: first and most importantly, this proof follows the same strategy as the proof for the dodecahedron in the next section while being simpler, and second, it generalizes to higher-dimensional cubes.

The complex $X$ contains three types of maximal simplices of diameter $2$, each of which is a $3$-simplex:

\hfill
\begin{tikzpicture}[scale=1]
  \node[dot,fill=blue] (v) at (0,0) {};
  \node[dot,fill=blue] (a) at (-90:1) {};
  \node[dot,fill=blue] (b) at (30:1) {};
  \node[dot,fill=blue] (c) at (150:1) {};
  \draw[thick] (v) -- (a) (v) -- (b) (v) -- (c);
\end{tikzpicture}
\hfill
\begin{tikzpicture}[scale=1]
  \node[dot,fill=blue] (a) at ({sqrt(2)},0) {};
  \node[dot,fill=blue] (b) at ({sqrt(2)},{sqrt(2)}) {};
  \node[dot,fill=blue] (c) at (0,{sqrt(2)}) {};
  \node[dot,fill=blue] (d) at (0,0) {};
  \draw[thick] (a) -- (b) -- (c) -- (d) -- (a);
\end{tikzpicture}
\hfill
\begin{tikzpicture}[scale=1]
  \node[dot,fill=blue] (a) at ({sqrt(2)},0) {};
  \node[dot] (b) at ({sqrt(2)},{sqrt(2)}) {};
  \node[dot,fill=blue] (c) at (0,{sqrt(2)}) {};
  \node[dot] (d) at (0,0) {};
  \node[dot] (e) at ($({sqrt(2)},0)+(1/2,1/2)$) {};
  \node[dot,fill=blue] (f) at ($({sqrt(2)},{sqrt(2)})+(1/2,1/2)$) {};
  \node[dot] (g) at ($(0,{sqrt(2)})+(1/2,1/2)$) {};
  \node[dot,fill=blue] (h) at ($(0,0)+(1/2,1/2)$) {};
  \draw[thick] (a) -- (b) -- (c) -- (d) -- (a) (e) -- (f) -- (g) -- (h) -- (e) (a) -- (e) (b) -- (f) (c) -- (g) (d) -- (h);
\end{tikzpicture}
\hfill{}

Such a picture determines a class of simplices as follows: the graph can be embedded as a full subgraph into the edge graph of the cube and any such embedding gives a simplex, which consists of the images of the colored vertices. Thus the first kind consists of a vertex and its neighbors and there are eight of these, call them $\alpha_1,\ldots,\alpha_8$. The second consists of the vertices of a square and there are six of these, call them $\beta_1,\ldots,\beta_6$. The last kind finally consists of every other vertex of the cube and there are two of these, call them $\tau_1,\tau_2$.

Now we can understand $X = \VR_{\le 2}$ as follows. We first consider the subcomplex $Y \defeq X \setminus \{\tau_1,\tau_2\}$ obtained by removing the two simplices of the third kind. We define a matching on this subcomplex matching each $\alpha_i$ with the $2$-simplex consisting of the outer vertices (its unique facet any two of whose vertices have distance two). This defines a deformation retraction of $Y$ onto the subcomplex consisting of simplices that are contained in some $\beta_i$. This complex is the $2$-skeleton of the cube with each square fattened up to a $3$-simplex. In particular, it is easily seen to be homotopy equivalent to a $2$-sphere. More precisely, we can use Lemma~\ref{lem:polygon_retraction} to retract each $\beta_i$ to either of the two subdivisions of its corresponding square.

At this point we know that $X$ is the $2$-sphere $Y$ with two $3$-balls $\tau_1,\tau_2$ added to it. This is very plausibly but not necessarily a $3$-sphere. It remains to see how the balls are glued in, i.e. determine the maps $\partial \tau_i \cong S^2 \to S^2 \simeq Y$ up to homotopy. In order to do so, we need to trace how the facets of each $\tau_j$ are mapped under the deformation retraction defined by the matching. Each facet of $\tau_j$ is matched with some $\alpha_i$ and thus is retracted to the other three facets of $\alpha_i$. Thus the boundary of $\tau_1$ is retracted to some subdivision of the boundary of the $2$-skeleton of the cube --- and $\tau_2$ to a different one:
\begin{center}
\begin{tikzpicture}[scale=1.5]
  \node[dot,fill=blue] (a) at ({sqrt(2)},0) {};
  \node[dot,fill=blue] (c) at (0,{sqrt(2)}) {};
  \node[dot] (e) at ($({sqrt(2)},0)+(1/2,1/2)$) {};
  \node[dot,fill=blue] (f) at ($({sqrt(2)},{sqrt(2)})+(1/2,1/2)$) {};
  \node[dot] (g) at ($(0,{sqrt(2)})+(1/2,1/2)$) {};
  \node[dot,fill=blue] (h) at ($(0,0)+(1/2,1/2)$) {};
  \draw[thick] (a) -- (h) (c) -- (h) (f) -- (h);
  \node[dot] (d) at (0,0) {};
  \node[dot,fill=white] (b) at ({sqrt(2)},{sqrt(2)}) {};
  \draw[thick] (a) -- (b) -- (c) -- (d) -- (a) (e) -- (f) -- (g) -- (h) -- (e) (a) -- (e) (b) -- (f) (c) -- (g) (d) -- (h);
  \draw[thick] (a) -- (c) -- (f) -- (a);
  \begin{scope}[xshift=4cm]
  \node[dot,fill=blue] (a) at (-{sqrt(2)},0) {};
  \node[dot,fill=blue] (c) at (0,{sqrt(2)}) {};
  \node[dot] (e) at ($(-{sqrt(2)},0)+(1/2,1/2)$) {};
  \node[dot,fill=blue] (f) at ($(-{sqrt(2)},{sqrt(2)})+(1/2,1/2)$) {};
  \node[dot] (g) at ($(0,{sqrt(2)})+(1/2,1/2)$) {};
  \node[dot,fill=blue] (h) at ($(0,0)+(1/2,1/2)$) {};
  \draw[thick] (a) -- (h) (c) -- (h) (f) -- (h);
  \node[dot] (d) at (0,0) {};
  \node[dot,fill=white] (b) at (-{sqrt(2)},{sqrt(2)}) {};
  \draw[thick] (a) -- (b) -- (c) -- (d) -- (a) (e) -- (f) -- (g) -- (h) -- (e) (a) -- (e) (b) -- (f) (c) -- (g) (d) -- (h);
  \draw[thick] (a) -- (c) -- (f) -- (a);
  \end{scope}
\end{tikzpicture}
\end{center}
Since both of these subdivisions are retracts of $Y$ we see that the maps $\partial \tau_i \to Y$ are homotopy equivalences. It follows that $X$ is indeed a $3$-sphere.

The proof for the dodecahedron in the following section differs from this one in two aspects: first, there are ten rather than two $3$-simplices to remove and second, the deformation retraction is more complicated.

\subsection*{Higher-dimensional cubes at radius $\mathbf{2}$}

The above proof generalizes to $\VR_{\le 2}(C_n)$ where $C_n$ is an n-dimensional cube. The result has been shown in \cite{adamaszek} so we only sketch our proof. The maximal simplices of diameter $2$ are again of three types: $1$-balls around vertices, $2$-faces, and the third type above of which there are two within any $3$-face. As above one can remove the simplices of the third type and retract the remaining complex to a subdivision of the $2$-skeleton. It is clear that the $2$-skeleton is a wedge of spheres and that its homology is generated by the $2$-skeleta of $3$-faces. It is also clear (from the discussion of the $3$-cube above) that each $2$-skeleton of a $3$-face gets filled in by one of the simplices of the third kind. Thus $\VR_{\le 2}(C_n)$ is a wedge of $3$-spheres and it remains to determine their number which is twice the number of $3$-faces minus $b_2(C_n^{(2)})$.

The number $f_n^k$ of $k$-faces of the $n$-cube satisfies the relation $f_{n+1}^{k+1} = f_{n}^{k+1} + f_n^k$ with $f_0^0 = 1$ and $f_n^k = 0$ for $n < k$. This leads to the generating function
\[
  \alpha(x,y) = \sum_{n,k \ge 0} f_n^k x^n y^k = \frac{1}{1-(2+y)x}\text{.}
\]
Since $C_n^{(k)}$ is a wedge of $k$-spheres, $b_k(C_n^{(k)})$ is the reduced Euler characteristic signed appropriately:
\[
  \tilde{e}_n^k = (-1)^k (\chi(C_n)^{(k)} - 1) = b_k(C_n^{(k)})
\]
Its generating function is
\begin{align*}
  \beta(x,y) &= \sum_{n,k \ge 0} \tilde{e}_n^k x^n y^k\\
             &= \frac{1}{1+y}\alpha(x,y) - \frac{1}{(1+y)(1-x)} = \frac{x}{(1-x)(1-(2+y)x)}\text{.}
\end{align*}
Specializing to fixed $k$ we see that the generating functions in $n$ for $f_n^k$ and $b_k(C_n^{(k)})$ are
\begin{align*}
  \alpha_k(x) &= \frac{1}{k!}\frac{\partial^k}{\partial y^k} \alpha(x,0) = \frac{x^k}{(1-2x)^{k+1}} && \text{and}\\
  \beta_k(x) &= \frac{1}{k!}\frac{\partial^k}{\partial y^k} \beta(x,0) =  \frac{x^{k+1}}{(1-x)(1-2x)^{k+1}} \text{.}
\end{align*}
Since $b_3(\VR_{\le 2}(C_n)) = 2 f_n^3 - b_2(C_n^{(2)})$ it has generating function
\[
  2\alpha_3 - \beta_2 = \frac{x^{3}}{(1-x)(1-2x)^{4}}\text{.} 
\]
One notices, in particular, that this is $1/x \cdot \beta_3$ so that $b_3(\VR_{\le 2}(C_n)) = b_3(C_{n+1}^{(3)})$, possibly hinting at a different approach.

\section{The interesting case}\label{sec:main}

In this section we treat the case left after Section~\ref{sec:simple}: $P$ is the dodecahedron and $3 \le r < 4$, so we may just take $r = 3$. Let $\Gamma$ be the edge graph of $P$. We follow the strategy from the last section and start by identifying the critical simplices:

\begin{lemma}\label{lem:critical}
  The complex $\VR_{\le r}$ contains $10$ tetrahedra $\tau_1,\ldots,\tau_{10}$ such that any two of its vertices have distance $3$.\qed
\end{lemma}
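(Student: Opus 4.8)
The plan is to produce the ten tetrahedra explicitly from the five cubes inscribed in a regular dodecahedron. Realize $P$ with vertex set the eight points $(\pm 1,\pm 1,\pm 1)$ together with the twelve cyclic permutations of $(0,\pm\varphi^{-1},\pm\varphi)$, where $\varphi=\tfrac{1+\sqrt5}{2}$. The eight points $(\pm 1,\pm 1,\pm 1)$ form a cube $C_1$ whose edges are face diagonals of pentagonal faces of $P$; applying the rotation group of $P$ (a copy of $A_5$) produces an orbit of five such cubes $C_1,\dots,C_5$, and one checks from the coordinates that every vertex of $P$ lies in exactly two of the $C_i$ and that any two distinct cubes meet in exactly two vertices. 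This is all classical, but it can also be read off directly from the coordinates together with a short computation.

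Each cube $C_i$ contains exactly two regular tetrahedra, spanned by two complementary sets of four alternate vertices; this yields ten tetrahedra $\tau_1,\dots,\tau_{10}$ in all. They are pairwise distinct: the two arising from one cube are disjoint, and two arising from different $C_i\neq C_j$ share at most $\lvert C_i\cap C_j\rvert=2$ vertices. An edge of such a tetrahedron is a face diagonal of the containing cube; for $C_1$, the face diagonal from $(1,1,1)$ to $(1,-1,-1)$ has Euclidean length $2\sqrt2$, that is $2\sqrt{2/3}$ after rescaling the circumradius to $1$, which by Table~\ref{tab:distances} is combinatorial distance $3$ in $P$. Since the symmetry group of $P$ is transitive on the five cubes and, through the symmetries of a single cube, on the twelve face diagonals of that cube, every face diagonal of every $C_i$ realizes combinatorial distance $3$. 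Hence all $\binom{4}{2}=6$ pairwise distances in each $\tau_j$ equal $3$, so each $\tau_j$ is a simplex of $\VR_{\le 3}$ with the stated property.

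It is worth recording that these are the only tetrahedra with that property, since they serve as the critical cells in the argument that follows. There are $20\cdot 6/2=60$ pairs of vertices at distance $3$ and $5\cdot 12=60$ pairs (cube, face diagonal of that cube); because a vertex has only six distance-$3$ neighbours — necessarily three inside each of its two cubes — no pair is a face diagonal of two cubes, so the distance-$3$ graph $G_3$ is the edge-disjoint union of the ten copies of $K_4$ spanned by $\tau_1,\dots,\tau_{10}$, and these account for all $60$ of its edges. A $4$-clique of $G_3$ all of whose edges lie in a single one of these $K_4$'s coincides with it, and a short case analysis — using that each vertex lies in only two of the $K_4$'s and that two distinct $K_4$'s share at most two vertices — rules out a $4$-clique spreading its six edges over several of them. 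The only real obstacle is precisely this last piece of bookkeeping; it is elementary but mildly tedious, and it is in any case confirmed by the computation in \cite{code}.
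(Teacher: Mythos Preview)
Your argument is correct. The paper gives no proof at all here: the lemma is stated with a bare \qed, followed only by the remark that these tetrahedra are well-known and that the two $H$-orbits of five witness $H\cong A_5$. Your construction via the five inscribed cubes is exactly the classical picture underlying that remark (the rotation group permutes the five cubes, and the central involution swaps the two tetrahedra in each cube), so the content is the same; you have simply written out what the paper leaves as folklore, including the distance check against Table~\ref{tab:distances}.

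Your additional paragraph on uniqueness goes beyond what the lemma asserts, but it is exactly what is needed for Proposition~\ref{prop:retraction_3_2}, where $\tau_1,\dots,\tau_{10}$ are to be the \emph{only} critical simplices. The counting $60=20\cdot 6/2=5\cdot 12$ identifying the distance-$3$ graph with the edge-disjoint union of ten $K_4$'s is clean; the residual case analysis ruling out a foreign $4$-clique is, as you say, routine. One small streamlining: once you know each vertex lies in exactly two of the ten $K_4$'s, a putative new $4$-clique would force some vertex to have all three of its clique-neighbours inside its two $K_4$'s, hence at least two of them in one $K_4$, and then the fourth vertex would have to be adjacent to three vertices of that $K_4$ --- already impossible unless it is the missing vertex of that $K_4$.
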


These are well-known: under the action of the rotation group $H$ of the dodecahedron they fall into two orbits and the action on each orbit witnesses that the rotation group is $H \cong A_5$. Let $G = \Aut(P)$ denote the full symmetry group of $P$, so $G = H \times \{\pm 1\}$.
As in the previous section, we want to refer to $G$-orbits or \emph{types} of simplices: when drawing a graph $\Lambda$ with a set $B$ of vertices colored, it describes at the same time all simplices obtained as $\iota(B)$ were $\iota \colon \Lambda \to \Gamma$ is an embedding as a full subgraph. Taking the same graph and coloring a subset $B' \subseteq B$ we have specified for every simplex $\iota(B)$ a face $\iota(B')$. With this notation the simplices in Lemma~\ref{lem:critical} are those of the type

\[
  \hugesimplex[5][8][12][19]{0}\text{.}
\]

The main technical ingredient to our proof is:

\begin{figure}[htb]
  \centering
  \includegraphics[page=1]{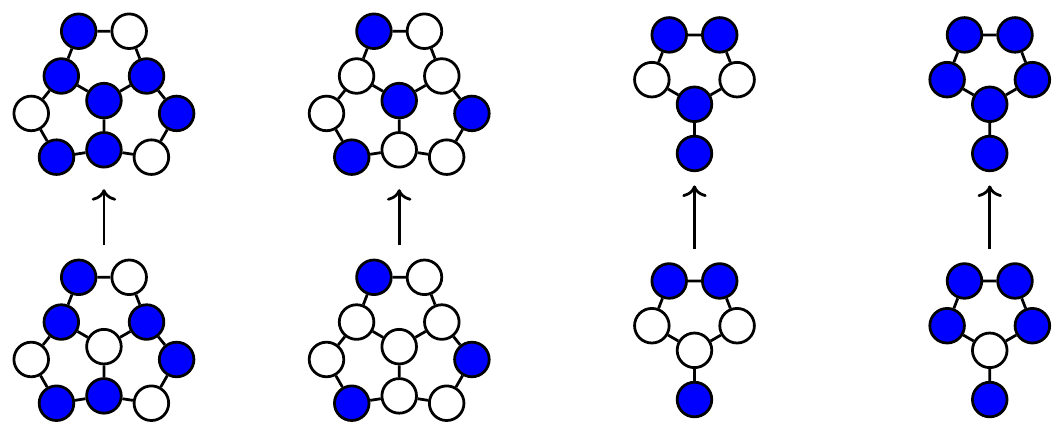}
\caption{Matched simplices with $C_3$ and $C_2$ symmetry.}
\label{fig:matching_symmetric}
\end{figure}

\begin{figure}
  \centering
  \includegraphics[page=2]{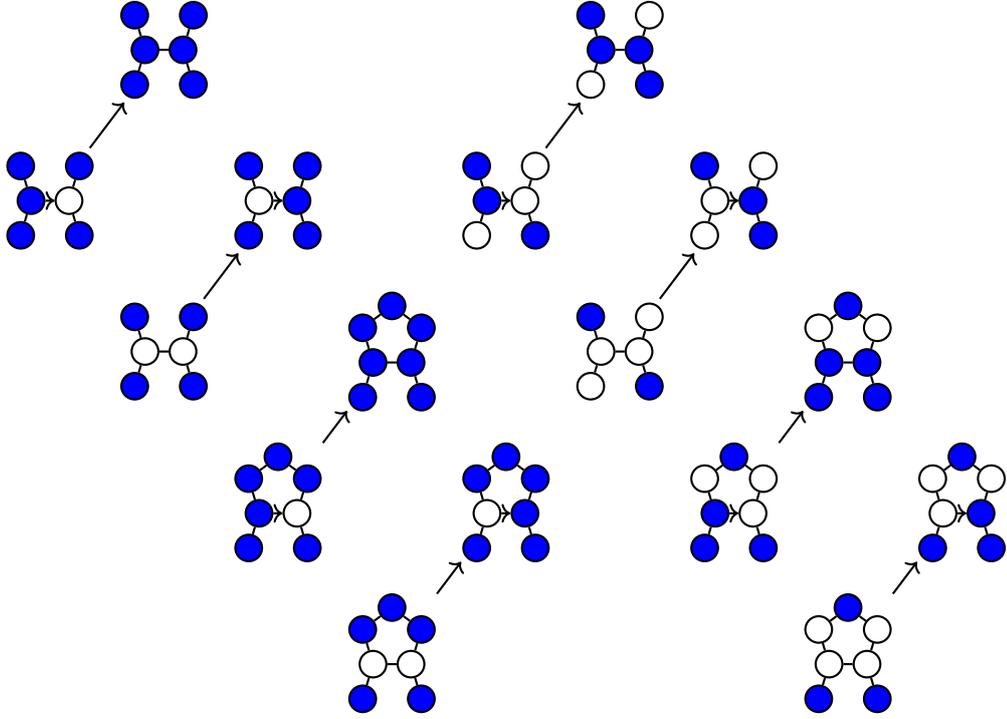}
\caption{Matched simplices with broken $C_2$ symmetry.}
\label{fig:matching_broken_symmetric}
\end{figure}

\begin{figure}
  \centering
  \includegraphics[page=3]{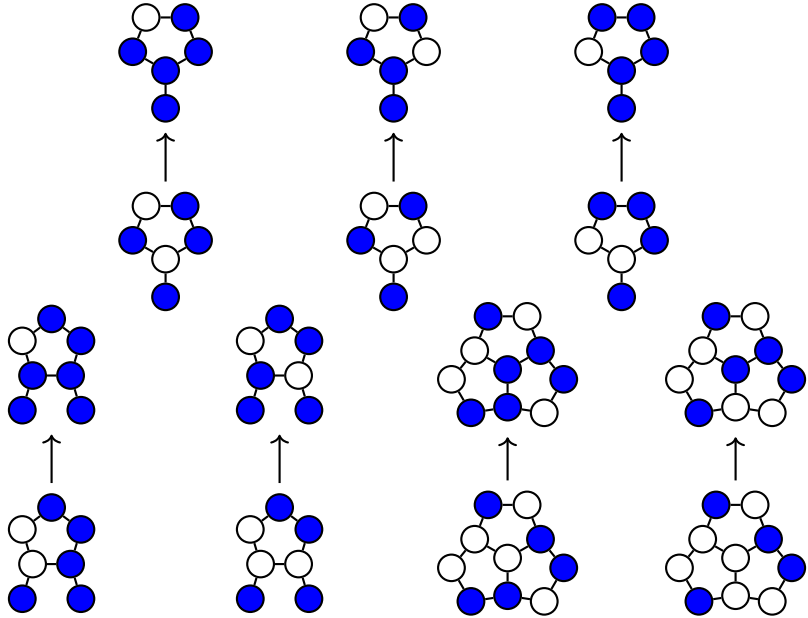}
\caption{Matched simplices with trivial stabilizer.}
\label{fig:matching_asymmetric}
\end{figure}

\begin{proposition}\label{prop:retraction_3_2}
  There is a strong deformation retraction of $\VR_{\le 3} \setminus \{\tau_1,\ldots,\tau_{10}\}$ to $\VR_{\le 2}$. More precisely there is a partial acyclic matching on the simplices of $\VR_{\le 3}$ of diameter $3$ whose only critical simplices are $\tau_1,\ldots,\tau_{10}$.
\end{proposition}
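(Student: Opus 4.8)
\emph{Setup and a first reduction.} It suffices to construct the matching; the deformation retraction then follows from Theorem~\ref{thm:morse} with $C = \VR_{\le 3}\setminus\{\tau_1,\dots,\tau_{10}\}$ and $D = \VR_{\le 2}$. Two preliminary observations organize the argument. First, a matched pair $\sigma \match \tau$ must consist of two simplices of diameter exactly $3$: a diameter-$\le 2$ simplex lies in $D$ and so must stay critical, a facet of a diameter-$3$ simplex has diameter $\le 3$, and a cofacet has diameter $\ge 3$ but cannot exceed $3$ inside $\VR_{\le 3}$. Second, by the remark in Section~\ref{sec:basics} any directed cycle alternates between two adjacent dimensions, so it has the form $\tau_1 \unmatch \sigma_1 \match \tau_2 \unmatch \cdots \match \tau_1$ with each $\sigma_i$ matched (upward) to $\tau_{i+1}$; hence every simplex occurring in a cycle has diameter $3$. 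Thus the entire problem takes place inside the set $S$ of diameter-$3$ simplices of $\VR_{\le 3}$ with its facet relation: we must match every element of $S$ other than $\tau_1,\dots,\tau_{10}$, acyclically.

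\emph{Constructing the matching.} Working in the edge graph $\Gamma$ and using the graphical ``type'' notation, I would first enumerate the $G$-orbits of diameter-$3$ simplices, equivalently the full subgraphs of $\Gamma$ realizing the possible distance patterns, ranging from distance-$3$ edges up through the maximal diameter-$3$ clusters. To each orbit I would assign a partner for its members: either a distinguished diameter-$3$ facet (``matching down'') or a distinguished diameter-$3$ cofacet (``matching up''), chosen $G$-equivariantly whenever possible. The obstruction is exactly the phenomenon illustrated in Section~\ref{sec:basics}: when a simplex has a nontrivial stabilizer that permutes its candidate partners without a fixed point, no equivariant choice exists in adjacent dimensions and the orbit must be split. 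Figures~\ref{fig:matching_symmetric}--\ref{fig:matching_asymmetric} are meant to record precisely how — pairs admitting a $C_3$- or $C_2$-equivariant choice, pairs in which a $C_2$ must be broken by a single global choice (splitting an orbit into two sub-orbits under an index-$2$ subgroup), and pairs with trivial stabilizer where any choice works. The ten simplices $\tau_1,\dots,\tau_{10}$ are declared critical.

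\emph{Verifying it is a matching covering $S\setminus\{\tau_i\}$.} Since the prescription is given orbit by orbit, with the symmetry-breaking choices made once and transported by $G$, three bounded checks remain: (a) no simplex receives two partners — the only possible clash is along a symmetry-broken orbit, where one checks that a simplex is never used simultaneously as a ``down'' target and an ``up'' target; (b) each prescribed pair really is a facet--cofacet pair of diameter $3$; and (c) running over orbit representatives, every diameter-$3$ simplex other than the $\tau_i$ occurs exactly once in some prescribed pair. Each of these is a finite case analysis read off from the figures and cross-checked by the GAP code of \cite{code}.

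\emph{Acyclicity, the crux.} By the reduction above a directed cycle would involve only diameter-$3$ simplices in two adjacent dimensions, but because the matching is not $G$-equivariant we cannot pass to the quotient. The plan is a patchwork argument in the spirit of \cite{kozlov}: exhibit a $G$-invariant, order-preserving map $f$ from the face poset of $\VR_{\le 3}$ to a small poset — for instance recording the dimension of a simplex together with the combinatorial shape of its distance-$3$ subgraph — such that every prescribed pair lies in a single fibre of $f$, and then check acyclicity fibrewise, each fibre being small enough to treat by inspection. (Alternatively, one argues directly that every maximal alternating $\match$--$\unmatch$ path in $S$ terminates, using that each ``up'' step adds a canonically determined vertex.) This is the main obstacle: the bookkeeping forced by the broken $C_2$-orbits is what makes it delicate, and it is here that the explicit content of Figures~\ref{fig:matching_symmetric}--\ref{fig:matching_asymmetric} and the machine verification do the real work.
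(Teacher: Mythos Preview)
Your outline is essentially the paper's proof: enumerate the $G$-orbits of diameter-$3$ simplices, prescribe a partner orbit by orbit (breaking symmetry where necessary via a global choice, exactly as in Figures~\ref{fig:matching_symmetric}--\ref{fig:matching_asymmetric}), and defer the well-definedness and completeness checks to a finite case analysis backed by \cite{code}.

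The one place where your plan diverges, and where it has a small but genuine flaw, is the acyclicity argument. The paper does not use a patchwork map; instead, for each pair of adjacent dimensions $(d,d+1)$ it simply lists the matched (types of) simplices in a linear order and verifies that every $\match$ and every $\unmatch$ arrow among them points to the right (Figures~\ref{fig:acyc_5_6}--\ref{fig:acyc_1_2}). This directly refines the relations to a partial order and hence rules out cycles. Your proposed patchwork map $f$, ``recording the dimension of a simplex together with the combinatorial shape of its distance-$3$ subgraph'', cannot serve: a matched pair $\sigma \match \tau$ lives in dimensions $d$ and $d+1$, so if $f$ sees dimension the pair will never lie in a single fibre, and the patchwork theorem does not apply. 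Dropping dimension from $f$ does not obviously help either, since the ``shape of the distance-$3$ subgraph'' typically changes when you add the matched vertex. Your alternative (showing every maximal $\match$--$\unmatch$ path terminates) is in principle fine, but in practice amounts to exactly the explicit ordering the paper carries out; the broken-$C_2$ orbits are precisely what make a clean, uniform termination argument awkward and force the dimension-by-dimension tabulation.
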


The matching will be $G$-equivariant up to the restrictions discussed in Section~\ref{sec:basics}. 
Since the matching cannot quite be equivariant, we make the following modification to our approach of specifying simplices: we fix once and for all a random orientation of the edges of $\Gamma$. Now we orient at most one edge of $\Lambda$ and require the embedding $\iota \colon \Lambda \to \Gamma$ to take that edge to an edge oriented in the same sense. Every unoriented edge of $\Lambda$ can be mapped to an edge with arbitrary orientation.

To illustrate the significance of this consider the three types of simplices

\begin{center}
  \smallsimplex[7][9][12][15][16][13][14]{0}{}, \quad
  \smallsimplex[7][9][12][15][16][13]{0}{->}, \quad and \quad
  \smallsimplex[7][9][12][15][16][14]{0}{->}.
\end{center}

We want to match the first type with the second. Note however, that the first type has a symmetry which the second and third type do not have. So without the edge orientation the first type would have the second as a facet in two ways but it cannot be matched with both. Thus the role of the edge orientation is to distinguish between (co)facets of simplices that have symmetry.

With these explanations on how to read pictures, the matching referred to in Proposition~\ref{prop:retraction_3_2} is the one given in Figures~\ref{fig:matching_symmetric},~\ref{fig:matching_broken_symmetric} and~\ref{fig:matching_asymmetric}. The heuristic to producing this matching was to preserve symmetry when possible as in Figure~\ref{fig:matching_symmetric}, and to otherwise destroy and recover it (as discussed in the introduction) as in Figure~\ref{fig:matching_broken_symmetric}.

\begin{proof}[Proof of Proposition~\ref{prop:retraction_3_2}]
  To prove the proposition we need to verify that every simplex of diameter $3$ in $\VR_{\le 3}(\Gamma)$ except for $\tau_1,\ldots,\tau_{10}$ appears in Figures~\ref{fig:matching_symmetric},~\ref{fig:matching_broken_symmetric} and~\ref{fig:matching_asymmetric}. This is tedious but easy and we leave it to the reader (or their computer).

  It remains to see that the matching is acyclic. This is done by showing that the relations $\match$ and $\unmatch$ on simplices of $\VR_{\le 3}(\Gamma)$ of adjacent dimensions $d,d+1$ can be refined to a partial order. More precisely, a potential $\match$-$\unmatch$ cycle in dimensions $d,d+1$ would involve only simplices that are part of a matched pair in these dimensions. It thus suffices to show that the simplices that are part of such a matching can be ordered. Such orders are depicted in Figures~\ref{fig:acyc_5_6} to~\ref{fig:acyc_1_2}.
\end{proof}

\begin{figure}
  \begin{center}
    \includegraphics[page=4]{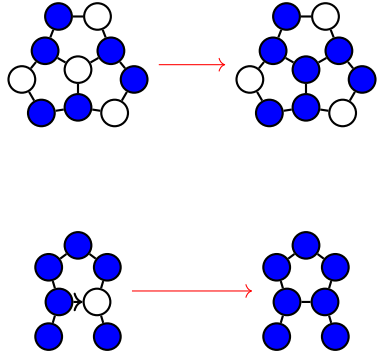}
  \end{center}
  \caption{Acyclicity of the matching in dimensions $5$ and $6$. A red arrow from $\sigma$ to $\tau$ means that $\sigma \match \tau$. All relations $\match$ are shown and there are no relations $\unmatch$ among the simplices. The point is that every arrow is pointing right.}
  \label{fig:acyc_5_6}
\end{figure}

\begin{figure}
  \begin{center}
    \includegraphics[page=5]{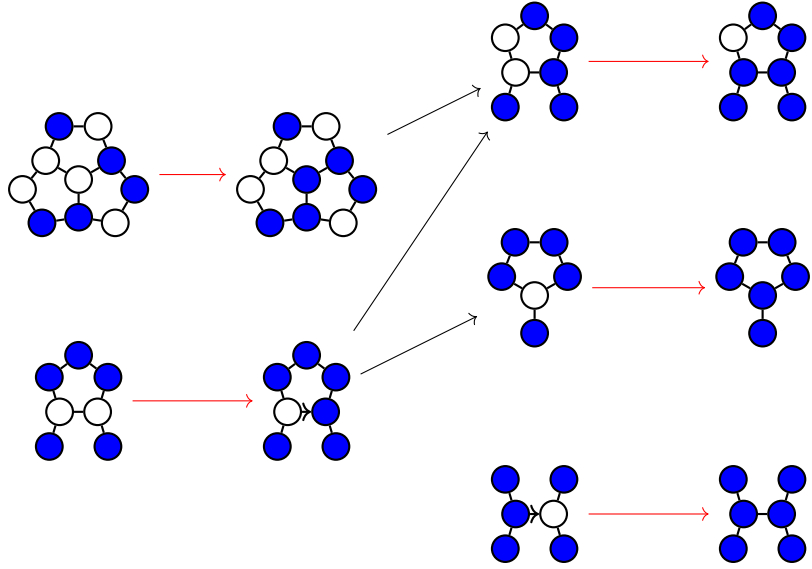}
  \end{center}
  \caption{Acyclicity of the matching in dimensions $4$ and $5$. A red arrow from $\sigma$ to $\tau$ means that $\sigma \match \tau$. A black arrow from $\tau$ to $\sigma$ means that $\tau \unmatch \sigma$. All relations $\match$ are shown and all relations $\unmatch$ among the simplices as well. The point is that every arrow is pointing right.}
  \label{fig:acyc_4_5}
\end{figure}

\begin{figure}
  \begin{center}
    \includegraphics[page=6]{figs}
  \end{center}
  \caption{Acyclicity of the matching in dimensions $3$ and $4$. The meaning is as in Figure~\ref{fig:acyc_4_5}.}
  \label{fig:acyc_3_4}
\end{figure}

\begin{figure}
  \begin{center}
    \includegraphics[page=7]{figs}
  \end{center}
  \caption{Acyclicity of the matching in dimensions $2$ and $3$. The meaning is as in Figure~\ref{fig:acyc_4_5}.}
  \label{fig:acyc_2_3}
\end{figure}

\begin{figure}
  \begin{center}
    \includegraphics[page=8]{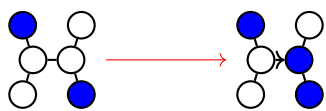}
  \end{center}
  \caption{Acyclicity of the matching in dimensions $1$ and $2$. The meaning is as in Figure~\ref{fig:acyc_4_5}.}
  \label{fig:acyc_1_2}
\end{figure}
To prove our theorem it remains to show that the map
\[
 \partial \tau_i \cong S^2 \to S^2 \simeq \VR_{\le 3} \setminus \{\tau_1,\ldots,\tau_{10}\}
\]
along which $\tau_i$ is glued in is a homotopy equivalence. To study this map is a purely algorithmic matter: we just have to homotope it along the deformation retraction from Proposition~\ref{prop:retraction_3_2}. In order to facilitate the exposition we make use of Hurewicz's theorem:

\begin{theorem}[{\cite[Theorem~4.32]{hatcher}}]
  If a space $X$ is $1$-connected then the map $\pi_2(X) \to H_2(X)$ is an isomorphism.
\end{theorem}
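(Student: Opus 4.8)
The plan is to recall the standard cellular proof of the Hurewicz theorem in degree $2$, which is the content behind the citation to \cite{hatcher}. By CW approximation we may assume $X$ is a CW complex, and since $X$ is $1$-connected we may take a CW model whose $1$-skeleton is a single point. Attaching cells of dimension $\ge 4$ changes neither $\pi_2$ (by cellular approximation of spheres and of homotopies between them) nor $H_2$ (by cellular homology), so we may further assume $X$ is $3$-dimensional, $X = W \cup \{e^3_\beta\}_\beta$, where $W \defeq X^{(2)}$ is a wedge $\bigvee_\alpha S^2_\alpha$ of $2$-spheres attached along maps $\varphi_\beta \colon S^2 \to W$.

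First I would handle the wedge $W$ itself. One checks that $\pi_2(S^2) \cong \Z$, generated by the identity, for instance from the long exact sequence of the Hopf fibration $S^1 \to S^3 \to S^2$, and that the Hurewicz map carries this generator to a generator of $H_2(S^2)$. Since maps $S^2 \to W$ and homotopies between them have compact image, $\pi_2(W)$ is the directed colimit of the groups $\pi_2$ of finite subwedges; for a finite wedge $\bigvee^m S^2$ the inclusion into $\prod^m S^2$ is the inclusion of the $3$-skeleton of the product, hence an isomorphism on $\pi_i$ for $i \le 2$, so $\pi_2(\bigvee^m S^2) \cong \prod^m \pi_2(S^2) \cong \Z^m$; the same skeleton remark computes $H_2(\bigvee^m S^2)$, and naturality of the Hurewicz map for the inclusion into the product and for the coordinate projections shows that $h_W \colon \pi_2(W) \to H_2(W)$ is an isomorphism taking $[S^2_\alpha]$ to the $\alpha$-th generator of $\bigoplus_\alpha \Z$.

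Next I would compare the effect of attaching the $3$-cells on homology and on homotopy. On homology, the long exact sequence of the pair $(X,W)$, together with $H_*(X,W) \cong \tilde H_*(X/W) \cong \tilde H_*(\bigvee_\beta S^3_\beta)$, gives $H_2(X) \cong \operatorname{coker}(\partial_3 \colon H_3(X,W) \to H_2(W))$, where $\partial_3$ sends the $\beta$-th generator to $(\varphi_\beta)_*[S^2] = h_W([\varphi_\beta])$. On homotopy, the pair $(X,W)$ is $2$-connected, so $\pi_2(W) \to \pi_2(X)$ is onto with kernel the image of $\pi_3(X,W) \to \pi_2(W)$, which is the subgroup of $\pi_2(W)$ generated by the classes $[\varphi_\beta]$. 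Granting this, $h_W$ carries the kernel of $\pi_2(W) \to \pi_2(X)$ isomorphically onto the kernel of $H_2(W) \to H_2(X)$, hence descends to an isomorphism $\pi_2(X) \to H_2(X)$; a short diagram chase identifies the induced map with the Hurewicz map, which finishes the argument.

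The main obstacle is the homotopy-theoretic input used in the last paragraph: that attaching $3$-cells to the $1$-connected complex $W$ quotients $\pi_2$ by the subgroup generated by the attaching classes, equivalently that $\pi_3(X,W)$ is generated by the characteristic maps of the $3$-cells. This is a special case of the relative Hurewicz theorem and is the one genuinely nontrivial point; I would derive it from the Blakers--Massey homotopy excision theorem. Alternatively one can bypass the cellular bookkeeping altogether and read the isomorphism off the homological Serre spectral sequence of the path--loop fibration $\Omega X \to PX \to X$: since $PX$ is contractible and $H_1(X) = 0$, the transgression $d^2 \colon H_2(X) \to H_1(\Omega X)$ is an isomorphism, and composing with $H_1(\Omega X) \cong \pi_1(\Omega X) = \pi_2(X)$ (and checking signs) recovers the Hurewicz map.
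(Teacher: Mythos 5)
This statement is not proved in the paper at all; it is quoted directly from the cited reference (Hatcher, Theorem~4.32), and your sketch is essentially the standard cellular proof given there: pass to a CW model with trivial $1$-skeleton, check the Hurewicz map on a wedge of $2$-spheres, and control the effect of attaching $3$-cells on $\pi_2$ versus $H_2$. The sketch is correct, and you rightly isolate the one genuinely nontrivial input --- that $\pi_3(X,W)$ is generated by the characteristic maps of the $3$-cells, i.e.\ the relative Hurewicz/homotopy-excision step --- which you attribute to Blakers--Massey rather than prove, exactly as in the cited source.
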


Therefore it suffices to show:

\begin{proposition}\label{prop:trace_retraction}
  For each $i$ the homology class $[\partial \tau_i]$ generates
  \[H_2(\VR_{\le 3} \setminus \{\tau_1,\ldots,\tau_{10}\}) \cong H_2(\VR_{\le 2}) \cong H_2(S^2) \cong \Z\text{.}\]
\end{proposition}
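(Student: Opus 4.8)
The plan is to follow the $2$-cycle $\partial\tau_i$ explicitly through the Morse deformation retraction of Proposition~\ref{prop:retraction_3_2} and read off the resulting class in $H_2(\VR_{\le 2})$. First note that each of the four triangular faces of $\tau_i$ has all pairwise vertex distances equal to $3$, hence is a diameter-$3$ simplex; since $\tau_1,\ldots,\tau_{10}$ are the only critical simplices, each such face is matched. Thus $\partial\tau_i$ is a genuine $2$-cycle of $\VR_{\le 3}\setminus\{\tau_1,\ldots,\tau_{10}\}$ supported on matched simplices, and the retraction will carry it into $\VR_{\le 2}$.

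Next I would describe the retraction on chains as a sequence of elementary moves, one per matched pair, performed in the acyclic order certified by Figures~\ref{fig:acyc_5_6}--\ref{fig:acyc_1_2}. When a $2$-simplex $\sigma$ is matched upward with a $3$-simplex $\rho$ (so $\sigma\match\rho$), every occurrence of $\sigma$ in the current cycle is replaced by the signed sum of the three remaining faces of $\rho$ (i.e.\ one subtracts a suitable multiple of $\partial\rho$); when an edge $e$ is matched upward with a $2$-simplex $\sigma$ (so $e\match\sigma$), acyclicity guarantees that $\sigma$ has already left the current cycle and there is nothing to do. Starting from $\partial\tau_i=\sum_j\pm\sigma_j$ and iterating, after finitely many moves the cycle is supported on diameter-$\le 2$ simplices, yielding an explicit $2$-cycle $z_i$ in $\VR_{\le 2}$ with $[z_i]=[\partial\tau_i]$; this is purely mechanical bookkeeping and is carried out by the GAP code at \cite{code}.

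Finally I would identify $[z_i]$ with a generator. Applying Lemma~\ref{lem:polygon_retraction} to each pentagonal facet gives a further deformation retraction of $\VR_{\le 2}$ onto a fixed triangulation $T$ of the $2$-skeleton $P^{(2)}$, identifying $H_2(\VR_{\le 2})$ with $H_2(T)\cong H_2(S^2)\cong\Z$, generated by the fundamental cycle $\sum_F[\,T|_F\,]$ with compatibly oriented facets $F$. Pushing $z_i$ onto $T$ and reading off the coefficient of one fixed triangle then shows $[z_i]=\pm[\text{generator}]$. It is enough to verify this for one $\tau_i$ from each of the two orbits mentioned after Lemma~\ref{lem:critical}: for any $g\in\Aut(P)$ one has $g_*[\partial\tau_i]=[\partial\tau_{g(i)}]$ and $g_*$ acts by $\pm1$ on $H_2(\VR_{\le 3}\setminus\{\tau_1,\ldots,\tau_{10}\})\cong H_2(S^2)$, so the absolute value of the coefficient is constant along $\Aut(P)$-orbits.

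The argument carries no conceptual difficulty; the only real obstacle is the size of the bookkeeping — tracking orientations of the many diameter-$3$ simplices (this is exactly the purpose of the random orientation of the edges of $\Gamma$ fixed in Section~\ref{sec:main}) and the signs accumulated by the elementary moves. I would therefore relegate the explicit computation to the accompanying code and keep only the qualitative description in the text, possibly adding a figure tracing the four faces of one $\tau_i$ down the gradient flow. One could in fact get by with a single $i$, since once a $3$-cell is attached to $S^2\simeq\VR_{\le 2}$ along a degree-$\pm1$ map the result is a $3$-ball and the remaining cells then attach along nullhomotopic maps, giving $\VR_{\le 3}\simeq\bigvee^9 S^3$ regardless; but the statement for all $i$ is the one that pins down the generator needed for the Hurewicz argument.
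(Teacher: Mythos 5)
Your proposal is correct and follows essentially the same route as the paper: push $\partial\tau_i$ through the discrete Morse retraction of Proposition~\ref{prop:retraction_3_2} by the elementary moves you describe, land in $\VR_{\le 2}$, and identify the resulting cycle with the fundamental class of a triangulation of $P^{(2)}$. The only difference is in execution --- the paper does the sign bookkeeping by hand using the type notation (resolving the orientation-dependence of certain intermediate $2$-simplices by summing over the four facets of $\tau_i$), whereas you delegate it to the computer and add an equivariance reduction; both are sound.
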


In order to compute integral homology we need to deal with oriented simplices. We do this by adding numbers to the colored blue vertices $B$ of our defining graphs $\Lambda$. As usual permuting the numbers of a simplex $\sigma$ by an even permutation gives the same simplex, while permuting them by an odd permutation gives the  simplex with opposite orientation, i.e. $-\sigma$. Our sign convention for boundaries is that
\[
  \partial [1,\ldots,n] = \sum_{i=1}^n (-1)^i [1,\ldots, \hat{i}, \ldots, n]\text{.}
\]

\begin{proof}[Proof of Proposition~\ref{prop:trace_retraction}]
  The facets of the simplices $\tau_1,\ldots,\tau_{10}$ are of type
  \[
    \varphi = \simplex[6][16][9]{1}
  \]
  so we need to know how they are mapped (homologically) under the deformation retraction. Our matching matches them with the simplices
  \[
    \psi = \simplex[6][16][9][14]{1}.
  \]
  In order to determine the image of $\varphi$ under the retraction we compute the boundary of $\psi$ as
  \[
  \partial \psi = - \simplex[16][9][14]{1} + \simplex[6][9][14]{1}  - \simplex[6][16][14]{1} + \simplex[6][16][9]{1}\text{.}
  \]
  Thus under the first step of the retraction $\varphi$ is taken to 
  \[
    \simplex[16][9][14]{1} + \simplex[6][14][9]{1}  + \simplex[6][16][14]{1}\text{.}
  \]
  Each summand is again matched with a cofacet and we compute its boundary as
  \[
    \partial \simplex[16][9][14][13]{1} = - \simplex[9][14][13]{1} + \simplex[16][14][13]{1} - \simplex[16][9][13]{1} +\simplex[16][9][14]{1}
  \]
  so the image of $\varphi$ after two retraction steps is
  \begin{align*}
                     \omega &= \simplex[9][14][13]{1} + \simplex[16][14][15]{1} + \simplex[6][14][7]{1}\\
                     & + \simplex[16][13][14]{1} + \simplex[6][15][14]{1} + \simplex[9][7][14]{1} \\
                     &+ \simplex[16][9][13]{1}+  \simplex[6][16][15]{1}+ \simplex[9][6][7]{1}
  \end{align*}
  The summands in the first two rows lie in $\VR_{\le 2}(\Gamma)$. For the summands in the last row
  \[
    \rho = \tinysimplex[7][12][14]{1}{->} \quad \text{or} \quad \rho' = \tinysimplex[7][12][13]{1}{->}
  \]
  the retraction image depends on edge orientations in $\Gamma$. While $\rho$ is matched with a facet and therefore does not contribute to second homology, $\rho'$ is matched with a cofacet whose boundary is
  \[
   \partial \tinysimplex[7][12][13][14]{1}{->} = \tinysimplex[13][12][14]{1}{->} + \tinysimplex[7][13][14]{1}{->} + \tinysimplex[12][7][14]{1}{->} + \tinysimplex[7][12][13]{1}{->}\text{.}
  \]
  What we retain from this is that the sum $\rho + \rho'$ is retracted to
  \[
    \kappa + \kappa' \defeq \tinysimplex[12][13][14]{1}{} + \tinysimplex[7][14][13]{1}{}
  \]
  
  It remains to make one observation. Each simplex $\tau_i$ has four facets of type $\varphi$. While we do not know whether a single simplex of type $\varphi$ retracts through simplices of type $\rho'$ or $\rho$ and thus has $\kappa$ in its retraction image or not, the sum of the four facets will have $\rho + \rho'$ in its retraction image and therefore $\kappa + \kappa'$ as well.

  Thus in summary the boundary of $\tau_i$ retracts to the sum of four chains of type
  \begin{align*}
                     \omega' &= \simplex[9][14][13]{1} + \simplex[16][14][15]{1} + \simplex[6][14][7]{1}\\
                     & + \simplex[16][13][14]{1} + \simplex[6][15][14]{1} + \simplex[9][7][14]{1} \\
                     & + \simplex[16][12][13]{1}+  \simplex[6][5][15]{1}+ \simplex[9][8][7]{1}
  \end{align*}
  each of which corresponds to the subdivision
  \begin{center}
\begin{tikzpicture}[scale=1,rotate=30,baseline={([yshift=-.8ex]current bounding box.center)}]
\def\r{1}
\def\s{1.5}
\def\ang{20}
\node[dot,minimum size=4pt] (14) at (0,0) {};
\node[dot,minimum size=4pt] (15) at (0:\r) {};
\node[dot,minimum size=4pt] (13) at (120:\r) {};
\node[dot,minimum size=4pt] (7) at(-120:\r) {};
\node[dot,minimum size=4pt] (12) at (60+\ang:\s) {};
\node[dot,minimum size=4pt] (16) at (60-\ang:\s) {};
\node[dot,minimum size=4pt] (8)  at (180+\ang:\s) {};
\node[dot,minimum size=4pt] (9) at (180-\ang:\s) {};
\node[dot,minimum size=4pt] (5) at (-60+\ang:\s) {};
\node[dot,minimum size=4pt] (6) at (-60-\ang:\s) {};
\draw[thick] (14) -- (7) (14) -- (15) (14) -- (13)
  (7) -- (8) (7) -- (6) (13) -- (9) (13) -- (12) (15) -- (5) (15) -- (16)
  (5) -- (6) (8) -- (9) (12) -- (16);
\draw[thick] (14) -- (16) (14) -- (9) (14) -- (6)
  (7) -- (9) (6) -- (15) (16) -- (13);
\end{tikzpicture}
\end{center}
  of the corresponding subcomplex of $P^{(2)}$. The sum represents a homology class generating $H_2(P^{(2)})$.
\end{proof}

\begin{remark}
  From the proof one sees that $H_3(\VR_{\le 3}(P^{(2)}))$ is equivariantly isomorphic to the $\Z G$-module
  \[
    \ker( \Z \tau_1 \oplus \ldots \oplus \Z \tau_{10} \stackrel{\varphi}{\to} \Z)
  \]
  where $\varphi (\tau_i) = 1$ for all $i$. This allows to read off the $G$-module structure, at least with field coefficients. When numbered appropriately, $H \cong A_5$ acts on each of $\tau_1,\ldots,\tau_5$ and $\tau_6,\ldots,\tau_{10}$ by its natural permutation representation while the $\{\pm 1\}$ factor swaps $\tau_i$ and $\tau_{i+5}$ for $1 \le i \le 5$.

  Using representation notation we let $1 \oplus \nu$ be the natural representation of $A_5$ where $1$ is the trivial representation and $\nu$ is the ($4$-dimensional) standard representation. We also let $1$ and $\sigma$ be the trivial and the sign representation of $\{\pm 1\}$ respectively. We denote by the same symbols the representations of $G$ obtained by composing with the quotient homomorphism (which makes sense because both trivial representations give the trivial representation). Then our discussion above shows that $\Q \tau_1 \oplus \ldots \oplus \Q \tau_{10}$ is the representation $(1 \oplus \nu) \otimes (1 \oplus \sigma) = 1 \oplus \sigma \oplus \nu \oplus (\nu \otimes \sigma)$. It follows that $H_3(\VR_{\le 3}(P^{(2)});\Q)$ as a $G$-representation is $\sigma \oplus \nu \oplus (\nu \otimes \sigma)$.
\end{remark}


\providecommand{\bysame}{\leavevmode\hbox to3em{\hrulefill}\thinspace}
\providecommand{\MR}{\relax\ifhmode\unskip\space\fi MR }
\providecommand{\MRhref}[2]{%
  \href{http://www.ams.org/mathscinet-getitem?mr=#1}{#2}
}
\providecommand{\href}[2]{#2}

\end{document}